\theoremstyle{plain}
\newtheorem{thm}{Theorem}[section]
\newtheorem{lem}[thm]{Lemma}
\newtheorem{prop}[thm]{Proposition}
\newtheorem*{proper*}{Property}
\newtheorem{cor}[thm]{Corollary}
\newtheorem{conj}[thm]{Conjecture}
\newtheorem*{lm*}{Lemma}
\newtheorem*{thm*}{Theorem}
\theoremstyle{definition}
\newtheorem*{df*}{Definition}
\newtheorem{exam}[thm]{Example}
\newtheorem{ex-notn}[thm]{Example/Notation}
\theoremstyle{remark}
\newtheorem{rem}[thm]{Remark}
\newtheorem{setting}[thm]{Setting}
\newtheorem*{acknowledgement*}{Acknowledgement}
\newtheorem*{ex*}{Example}
\newtheorem*{exer*}{Exercise}
\newtheorem*{rem*}{Remark}
\newtheorem*{prob*}{Problem}
\newtheorem*{prop*}{Proposition}
\def\ann{\operatorname{ann}}
\def\depth{\operatorname{depth}}
\def\hdepth{\operatorname{hdepth}} 
\def\MG{\operatorname{MG}} 
\def\projdim{\operatorname{proj\,dim}}
\def\sdepth{\operatorname{sdepth}} 
\def\frakm{\mathfrak{m}}
\def\frakp{\mathfrak{p}}
\def\KK{\mathbb{K}}
\def\NN{\mathbb{N}}
\def\ZZ{\mathbb{Z}}
\def\calD{\mathcal{D}}
\def\calH{\mathcal{H}}
\def\calI{\mathcal{I}}
\def\calP{\mathcal{P}}
\def\alert#1{\textcolor{Magenta}{#1}}
\def\ceil#1{\left\lceil #1 \right\rceil}
\def\deg{\operatorname{deg}}
\def\floor#1{\left\lfloor #1 \right\rfloor}
\def\Index#1{\emph{#1}}
\def\isom{\cong}
\def\sqr#1#2{{\vcenter{\hrule height.#2pt
\hbox{\vrule width.#2pt height#1pt \kern#1pt
\vrule width.#2pt}
\hrule height.#2pt}}}
\DeclareMathOperator{\m}{\mathbf{m}}
\let\oldmarginpar\marginpar
\renewcommand\marginpar[1]{\-\oldmarginpar[\raggedleft\footnotesize #1]%
{\raggedright\footnotesize #1}}
\def\opn#1#2{\def#1{\operatorname{#2}}} 
\opn\lex{lex}
\opn\rev{rev}
\opn\Lex{Lex}
\opn\GL{GL}
\opn\initial{in}
\begin{document}
\title{Lexsegment ideals of Hilbert depth 1}
\author{Yi-Huang Shen}
\address{Department of Mathematics, University of Science and Technology of China, Hefei, Anhui, 230026, China}
\address{Wu Wen-Tsun Key Laboratory of Mathematics, USTC, Chinese Academy of Sciences, China}
\thanks{This work is supported by the National Natural Science Foundation of China and the Fundamental Research Funds for the Central Universities (WK0010000017). 
Many important ideas of this work came from discussions with Professor J{\"u}rgen Herzog, to whom the author wants to express the most sincere gratitude. 
This work was partly completed while the author was visiting the Hong Kong University of Science and Technology. He thanks Professor Beifang Chen for useful discussions and hospitality during this visit.}
\email{yhshen@ustc.edu.cn}
\subjclass[2010]{ Primary
05E45, 
05E40, 
06A07; 
Secondary
13C13, 
05C70} 
\keywords{Stanley depth; Hilbert depth; Lexsegment ideals; Squarefree lexsegment ideals}
\date{\today}
\begin{abstract}
  Let $I\subset S=\KK[x_1,\dots,x_n]$ be a lexsegment ideal, generated by monomials of degree $d$. The main aim of this paper is to characterize when the Hilbert depth of $I$ will be $1$, in the standard graded case. In addition to this, we will give an estimate of depth of squarefree monomial ideals,
  generalizing a result of Popescu \cite{arXiv:1206.3977}. 
  We will also show that Stanley conjecture holds for squarefree stable ideals, in the multigraded case.
\end{abstract}
\maketitle

\section{Introduction}
Let $S=\KK[x_1,\dots,x_n]$ be a polynomial ring over a field $\KK$ of Krull dimension $n \ge 1$. It has a canonical $\ZZ^n$-grading. Let $M$ be a finitely generated $\ZZ^n$-graded $S$-module. Stanley \cite[5.1]{MR666158} conjectured that
\begin{equation}
\sdepth(M)\ge \depth(M).
\tag{$\bigstar$}
\label{Stanley_conj}
\end{equation}
Here, the Stanley depth $\sdepth(M)$ of $M$ will be introduced in Section 2 of this paper. Correspondingly, the depth $\depth(M)$ of $M$ is the common length of maximal $M$-sequences in the graded maximal ideal $(x_1,\dots,x_n)S$.

Herzog, Soleyman Jahan and Yassemi \cite[4.5]{MR2366164} showed that conjecture \eqref{Stanley_conj} implies the following combinatorial conjecture, due to Garsia {\cite[5.2]{MR597728}} and Stanley \cite[page 149]{MR526314} separately:
\[
\text{Every Cohen-Macaulay simplicial complex is partitionable.}
\]

Conjecture \eqref{Stanley_conj} remains open. From our point of view, the current related research includes at least the following three areas: 
\begin{itemize}
  \item Verify conjecture \eqref{Stanley_conj} in special cases. For example, the module $M$ is:
    \begin{itemize}
      \item an almost clean module \cite{arxiv.0712.2308};
      \item of the form $M=S/I$ where $I$ is an initial or final lexsegment ideal \cite[3.9]{arXiv:1010.5615}, or a generic or cogeneric Cohen-Macaulay ideal\cite[Theorems 3, 5]{MR1958009}, or a Cohen-Macaulay monomial ideal of codimension 2 \cite[2.4]{MR2366164},  or a Gorenstein monomial ideal of codimension 3 \cite[3.1]{MR2366164},  or the edge ideal of a complete $k$-partite graph \cite[2.8]{arXiv:1104.1018}. 
      \item a monomial ideal $I$ such that this $I$ is the intersection of four prime monomial ideals {\cite[4.2]{arXiv:1009.5646}},  or  the intersection of three monomial primary ideals {\cite[2.2]{arXiv:1107.3211}}, or an almost complete intersection ideal \cite[1.9]{arXiv:1112.4956}, or a general monomial ideal if the Krull dimension of the ring $n \le 5$ \cite[2.11]{MR2554495}.
    \end{itemize}
  \item Determine the Stanley depth of special modules, e.g, almost clean modules \cite{arxiv.0712.2308}, graded maximal ideals \cite{BHKTY}, monomial complete intersection ideals \cite{arXiv.org:0805.4461}, and some squarefree Veronese ideals \cite{arXiv:0907.1232}, \cite{project4} and \cite{arXiv:0911.5458}.
  \item Generalize the notion of Stanley depth, e.g., to cover depth \cite{MR2531663}, Hilbert depth \cite{MR2609292} and \cite{bruns-2009}, new depth \cite{arXiv:0908.3699}.
\end{itemize}
This list is definitely not complete, nor is it meant to be.

Many authors use the obstruction from Hilbert function to give upper bounds for Stanley depth. Hilbert depth, which studies the decomposition of the equivalent class of modules sharing the same (multigraded or standard graded) Hilbert series, consolidate this type of treatment. We will review the basics of Stanley depth and Hilbert depth in Section 2.

As an application of these notions, we give a sufficient condition for deciding the depth of $I/J$ where $J\subset I$ are two squarefree monomial ideals of $S$. This result, which generalizes a corresponding one in Popescu \cite{arXiv:1206.3977}, originates from a consideration of Stanley decomposition of $I/J$, as explained in Remark \ref{sd_exp}. Due to the still-openness of conjecture \eqref{Stanley_conj}, it is only resolved by applying Hilbert depth techniques, as shown in the proof of Theorem \ref{popescu-gen}.

In the standard graded setting, the Hilbert depth of the powers of the maximal ideals \cite{arXiv:1002.1400} and squarefree Veronese ideals \cite{arXiv:1008.4108} have been calculated and their relation has been studied \cite{arXiv:1106.3922}, from combinatorial point of view.  Except for these, the Hilbert depth of many interesting graded objects remains unknown.  

The aim of this paper is to characterize lexsegment ideals of Hilbert depth $1$. This will be completed in Section 5 as Theorem \ref{hdepth_lex}. Before that, we need some preparations which we outline here.

The lexsegment ideals that we will investigate in Theorem \ref{hdepth_lex} are special (strongly) stable ideals. We will go over these notions as well as their squarefree counterparts in Section 3. Several easy results related to their depth and Hilbert depth will be outlined as well.

Our proof for Theorem \ref{hdepth_lex} depends on the Gil Kalai correspondence between strongly stable ideals and squarefree strongly stable ideals. We will go over this relation in Section 4. This correspondence provides the underlying philosophy for connecting the Stanley depth and Hilbert depth of these two special types of monomial ideals, which we formulate as Conjecture \ref{conj2}.

After proving the main result in Section 5, we take a further study of the squarefree stable ideals in Section 6. We will calculate the Stanley depth of $S/I$ when $I$ is a squarefree strongly stable ideal in Remark \ref{sd_sqf_st_st} and show the Stanley conjecture \eqref{Stanley_conj} holds for squarefree stable ideals in Theorem \ref{sqf_st}.

\section{Stanley depth and Hilbert depth}
Let $S=\KK[x_1,\dots,x_n]$ be a polynomial ring over a field $\KK$ of Krull dimension $n$, with the lexicographic order $>_{\lex}$ induced by the ordering $x_1>x_2 > \cdots > x_n$. We consider two graded structures on $S$:
\begin{enumerate}[a]
  \item the multigrading, more precisely, the $\ZZ^n$-grading in which the degree of $x_i$ is the $i$th vector $e_i$ of the canonical basis;
  \item the standard grading over $\ZZ$ in which each $x_i$ has degree $1$.
\end{enumerate}

Following the convention in \cite{bruns-2009}, we will use the subscript $n$ to denote invariants associated with the multigrading, and the subscript $1$ for those associated with the standard grading.

Let $M$ be a finitely generated graded (in either standard graded or multigraded setting) $R$-module and $\frakm=(x_1,\dots,x_n)S$ be the graded maximal ideal of $S$. The grade of $\frakm$ on $M$ (also known as the depth of $M$) shall be simply written as $\depth(M)$.

A \Index{Stanley decomposition} of $M$ is a finite family
$\calD=(S_i,x_i)_{i\in \calI}$,
in which $x_i$ is a homogeneous element of $M$ and $S_i$ is a graded $\KK$-algebra retract of $S$ for each $i\in \calI$ such that $S_i\cap \ann(x_i)=0$, and 
\begin{equation}
  M=\bigoplus_i S_i x_i  
  \label{st-decomp}
\end{equation}
as a $\KK$-graded space. The direct sum on the right hand side of \eqref{st-decomp} carries a structure of an $R$-module, and thus has a well-defined $\depth$, called the \Index{Stanley depth} of this decomposition $\calD$. The \Index{Stanley depth} $\sdepth(M)$ of $M$ is the maximal depth of a Stanley decomposition of $M$.  We always set $\sdepth(0)=\infty$.

If $J\subsetneq I$ are two squarefree monomial ideals of $S$, consider the poset $P_{I\setminus J}$ of all squarefree monomials of $I\setminus J$ with the order given by divisibility. If $u\subseteq v$ are two monomials in $P_{I\setminus J}$, the interval $[u,v]$ is the set 
\[
\Set{w\in P_{I\setminus J}: \text{ $u$ divides $w$ and $w$ divides $v$}}.
\]
Herzog, Vladoiu and Zheng's method \cite[2.5]{arxiv.0712.2308} for squarefree monomial ideals can be easily checked to be equivalent to the following characterization:

\begin{lem}
  Let $k$ be a positive integer and $J\subsetneq I$ two squarefree monomial ideals of $S$. Then $\sdepth_n(I/J)\ge k$ if and only if $P_{I\setminus J}$ has a disjoint partition $\calP:P_{I\setminus J} = \coprod_{i=1}^l [u_i,v_i]$ such that the cardinalities $\left| v_i \right|\ge k$ for all $i$.
\end{lem}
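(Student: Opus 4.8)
The plan is to read the statement off the general Herzog--Vladoiu--Zheng partition method by specializing its characteristic poset to the squarefree situation; the only thing that really needs checking is that, for squarefree ideals, nothing is lost by working solely with the squarefree monomials of $I\setminus J$. I would organize the whole argument around one structural observation: since $I$ and $J$ are squarefree, a monomial $x^a$ lies in $I\setminus J$ if and only if its radical $\prod_{i\in\supp(a)}x_i$ does. Writing $x_A=\prod_{i\in A}x_i$ for $A\subseteq[n]:=\{1,\dots,n\}$ and identifying a squarefree monomial with its support, this reduction has two immediate consequences. First, $P_{I\setminus J}$ is \emph{convex} in the Boolean lattice: if $u\mid w\mid v$ with $u,v\in P_{I\setminus J}$ then $w\in P_{I\setminus J}$ (use that $I$ is an ideal for $u\mid w$, and that $w\mid v\notin J$ forces $w\notin J$). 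Hence every interval $[u_i,v_i]$ appearing in a partition is automatically the full Boolean interval $\{x_C: A_i\subseteq C\subseteq B_i\}$, where $u_i=x_{A_i}$ and $v_i=x_{B_i}$.

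For the implication ($\Leftarrow$), given a partition $P_{I\setminus J}=\coprod_i[u_i,v_i]$ with $|v_i|\ge k$, I would assign to each interval the Stanley space $x_{A_i}\,\KK[x_j:j\in B_i]$ and verify that these assemble into a Stanley decomposition of $I/J$ of the required depth. A monomial $x^a$ lies in $x_{A_i}\KK[x_j:j\in B_i]$ exactly when $A_i\subseteq\supp(a)\subseteq B_i$, so its radical lies in $[u_i,v_i]$; since the radical of every monomial of $I\setminus J$ lies in exactly one interval of the partition, these Stanley spaces are pairwise disjoint and together exhaust $I/J$. Each is $S$-free because $x_{A_i}\cdot m\notin J$ for every monomial $m\in\KK[x_j:j\in B_i]$: the radical of $x_{A_i}m$ sits in $[u_i,v_i]\subseteq P_{I\setminus J}$, hence is not in $J$, and the squarefree reduction then gives $x_{A_i}m\notin J$. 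As the depth of $x_{A_i}\KK[x_j:j\in B_i]$ equals $|B_i|=|v_i|\ge k$, we obtain $\sdepth_n(I/J)\ge\min_i|v_i|\ge k$.

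Conversely, for ($\Rightarrow$) I would start from a Stanley decomposition $I/J=\bigoplus_i x^{a_i}\KK[Z_i]$ with all $|Z_i|\ge k$, which may be taken with monomial generators $x^{a_i}$, and restrict attention to the squarefree monomials of $I\setminus J$. The point to confirm — and where a little care is needed — is that the squarefree monomials contained in a single Stanley space form a Boolean interval: if $x^{a_i}\KK[Z_i]$ contains any squarefree monomial, then $x^{a_i}$ must itself be squarefree, say $x^{a_i}=x_{A_i}$, and its squarefree monomials are precisely $\{x_C: A_i\subseteq C\subseteq A_i\cup Z_i\}$, an interval with top $x_{A_i\cup Z_i}$; Stanley spaces with non-squarefree generator contain no squarefree monomial and are simply discarded. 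Since the decomposition partitions all monomials of $I\setminus J$, the induced intervals partition $P_{I\setminus J}$, and each top has cardinality $|A_i\cup Z_i|\ge|Z_i|\ge k$, giving the desired partition. The main obstacle is exactly this interplay between genuinely non-squarefree Stanley spaces and the purely squarefree poset $P_{I\setminus J}$; once the ``radical equals support'' reduction pins down the squarefree part of every Stanley space as a Boolean interval, both implications collapse to bookkeeping, which is why the equivalence with the method of \cite[2.5]{arxiv.0712.2308} can indeed be checked easily.
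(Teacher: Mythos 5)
Your proof is correct and is essentially the verification the paper itself leaves implicit: the paper gives no written proof, asserting only that the statement is "easily checked to be equivalent" to the Herzog--Vladoiu--Zheng method of \cite[2.5]{arxiv.0712.2308}, and your argument is exactly that check, namely the specialization of their characteristic poset to $g=(1,\dots,1)$, with the "radical equals support" reduction supplying the disjointness, covering, and freeness verifications in one direction and the identification of the squarefree part of each Stanley space as a Boolean interval in the other. Nothing further is needed.
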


With the additional help of \cite[3.3]{arXiv.org:0805.4461} and its proof, one can further assume (require) that in the partition $\calP$ above, for every $u_i$ such that $\deg(u_i)\le k$, one has $\deg(v_i)=k$.

\begin{cor}
  \label{rem-k}
  Let $k$ be a positive integer and $I$ a squarefree monomial ideal of $S$. If $J$ is another squarefree monomial ideal of $S$, generated by monomials of degrees $\ge k$, then $\sdepth_n(I)\ge k$ if and only if $\sdepth_n(I+J)\ge k$.
\end{cor}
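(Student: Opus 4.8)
The plan is to translate both sides into the combinatorial criterion of the preceding Lemma, applied to $I$ and to $I+J$ with the zero ideal as denominator, and then to shuffle intervals between the two posets. Write $P_I$ for the poset of squarefree monomials of $I$ and $P_{I+J}$ for that of $I+J$. Since $I\subseteq I+J$ these are nested, $P_I\subseteq P_{I+J}$, and the difference $P_{I+J}\setminus P_I$ is exactly the set of squarefree monomials lying in $(I+J)\setminus I$. The one structural fact I would use throughout is that, for monomial ideals, a squarefree monomial $w$ belongs to $I+J$ but not to $I$ precisely when it is divisible by a minimal generator of $J$ but by none of $I$; in particular every such $w$ satisfies $\deg(w)\ge k$, because $J$ is generated in degrees $\ge k$.

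For the implication $\sdepth_n(I)\ge k \Rightarrow \sdepth_n(I+J)\ge k$, I would start from a partition $P_I=\coprod_i [u_i,v_i]$ with every $\deg(v_i)\ge k$, furnished by the Lemma. I keep all of these intervals and adjoin one singleton $[w,w]$ for each $w\in P_{I+J}\setminus P_I$. By the structural fact each such $w$ has degree $\ge k$, so every top in the enlarged family still has degree $\ge k$; the new intervals are plainly disjoint from the old ones and from one another, and together the two collections exhaust $P_{I+J}$. The Lemma then yields $\sdepth_n(I+J)\ge k$.

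For the converse, I would begin with a partition $P_{I+J}=\coprod_i [u_i,v_i]$, again with $\deg(v_i)\ge k$, and intersect each interval with $P_I$; these intersections partition $P_I$, so it only remains to rewrite each one as a union of intervals with tops of degree $\ge k$. I split into two cases according to the bottom element $u_i$. If $u_i\in I$, then since $I$ is closed under taking multiples the whole interval $[u_i,v_i]$ already lies in $P_I$, and I keep it unchanged. If $u_i\notin I$, then by the structural fact $u_i\in J$, whence $\deg(u_i)\ge k$; consequently every $w\in [u_i,v_i]\cap P_I$ is a multiple of $u_i$ and so has $\deg(w)\ge k$, which lets me break this intersection into singletons $[w,w]$, each with top of degree $\ge k$. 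Assembling the retained intervals with these singletons gives the required partition of $P_I$, and the Lemma gives $\sdepth_n(I)\ge k$.

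The only genuinely delicate point is this converse direction, where an interval of the partition of $P_{I+J}$ may meet $P_I$ in a set that is not itself an interval. The crux that dissolves the difficulty is the observation that whenever the bottom of such an interval escapes $I$ it must lie in $J$, forcing its degree — and hence the degree of everything above it — to be at least $k$; this is exactly what makes the crude refinement into singletons admissible, and it is worth noting that no appeal to the finer partition statement following the Lemma is actually needed here.
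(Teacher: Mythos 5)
Your proof is correct, and it is essentially the argument the paper leaves implicit: the corollary is stated without proof as a consequence of the interval-partition criterion, and your two directions (adjoining singletons for the squarefree monomials of $(I+J)\setminus I$, all of degree $\ge k$; and, conversely, restricting a partition of $P_{I+J}$ to $P_I$, keeping intervals whose bottom lies in $I$ and shattering the rest into singletons of degree $\ge k$) fill in exactly the intended reasoning. Your closing observation is also accurate -- the refinement stated after the Lemma (that $\deg(v_i)=k$ whenever $\deg(u_i)\le k$) is not actually needed for this corollary.
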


The Hilbert decomposition that we shall review next, is a generalization of Stanley decomposition. To be more precise, a \Index{Hilbert decomposition} of $M$  is a finite family
$\calH=(S_i,s_i)_{i\in \calI}$,
such that $s_i \in \ZZ^m$ (where $m=1$ or $m=n$, respectively, depending on the grading), $S_i$ is a graded $\KK$-algebra retract of $R$ for each $i\in \calI$, and 
\begin{equation}
  M\isom \bigoplus_i S_i (-s_i)
  \label{hilb-decomp}
\end{equation}
as a graded $\KK$-vector space. The depth of the $R$-module on the right hand side of \eqref{hilb-decomp} is called the \Index{Hilbert depth} of $\calH$. The \Index{Hilbert depth} $\hdepth(M)$ of $M$ is the maximal depth of a Hilbert decomposition of $M$.

It follows easily from the definition that 
\[
\hdepth_1(I/J)\ge \sdepth_1(I/J)\ge \sdepth_n(I/J) = \hdepth_n(I/J)
\]
for any monomial ideals $J\subsetneq I$ in $S$; see \cite[2.8]{bruns-2009}.  Furthermore, we have the following facts for nonzero finitely generated graded module $M$.
\begin{enumerate}[a]
  \item Stanley conjecture holds in the standard graded case: 
    \[
    \sdepth_1(M)\ge \depth(M).
    \]
    This is a result of Baclawski and Garsia \cite{MR609203}, with a short proof in \cite[2.7]{bruns-2009}.
  \item When $\depth(M)\ge 1$, $\sdepth_n(M)\ge 1$ by \cite[2.13]{bruns-2009}. Since any nonzero monomial ideal $I$ satisfies $\depth(I)\ge 1$, one has $\sdepth_n(I)\ge 1$.
\end{enumerate}

It is worth noting that, in the standard graded case, Uliczka \cite[3.2]{MR2609292} proved the formula
\begin{equation}
  \hdepth_1(M)= \max\Set{u: (1-T)^u H_{M}(T) \text{ is positive}}.
  \label{Uliczka-formula}
\end{equation}
Here $H_{M}(T)$ is the Hilbert series of $M$, and a rational function is called \Index{positive} if its Laurent expansion at $0$ has only nonnegative coefficients.
In the following, we give an application of the formula \eqref{Uliczka-formula}.

\begin{setting}
  \label{setting1}
Let $I\supsetneq J$ be two squarefree monomial ideals,  generated by monomials of degrees $\ge d$ and $\ge d+1$ respectively.  Write $\rho_j(I\setminus J)$ for the number of all squarefree monomials of degree $j$ in $I\setminus J$.  
\end{setting}

It is known that $\depth(I/J)\ge d$ by \cite[1.1]{arXiv:1110.1963}.
Assume that $\depth_S(I/J)\ge t$, where $t$ is an integer such that $d\le t < n$. If 
\[
\rho_{t+1}(I\setminus J) < \alpha_t:= \sum_{i=0}^{t-d} (-1)^{t-d+i} \rho_{d+i}(I\setminus J),
\]
\cite[1.3]{arXiv:1206.3977} proved that $\depth_S(I/J)=t$ independently of the characteristic of $\KK$.
We generalize Popescu's result as follows:

\begin{thm}
  \label{popescu-gen}
  With the Setting \ref{setting1}, assume that $\depth_S(I/J)\ge t$, where $t$ is an integer such that $d\le t < n$.  If for some $k$ with $d+1\le k \le t+1$: 
  \begin{equation}
    \rho_k(I\setminus J) < \sum_{j=d}^{k-1} (-1)^{k-j+1}\binom{t+1-j}{k-j}\rho_j(I\setminus J),
    \label{ineq_k}
  \end{equation}
  then $\depth_S(I/J)=t$ independently of the characteristic of $\KK$.
\end{thm}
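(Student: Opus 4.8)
The plan is to establish the upper bound $\hdepth_1(I/J)\le t$; combined with the chain $\hdepth_1(I/J)\ge \sdepth_1(I/J)\ge \depth(I/J)$ recorded in the excerpt (the last inequality being the Baclawski--Garsia fact (a)), this yields $\depth_S(I/J)\le t$, and together with the standing hypothesis $\depth_S(I/J)\ge t$ it forces the desired equality. Since the bound $\hdepth_1(I/J)\le t$ will be read off from the Hilbert series alone, it holds in every characteristic, which accounts for the characteristic-independence in the statement. To prove $\hdepth_1(I/J)\le t$ I would invoke Uliczka's formula \eqref{Uliczka-formula}: it suffices to exhibit a single negative coefficient in the Laurent expansion of $(1-T)^{t+1}H_{I/J}(T)$. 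Note that the set of $u$ for which $(1-T)^uH_{I/J}(T)$ is positive is an initial segment of $\NN$, because dividing by $1-T$ amounts to forming partial sums of coefficients and so preserves positivity; hence failure of positivity at $u=t+1$ indeed gives $\hdepth_1(I/J)<t+1$.

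The crux is a closed form for the Hilbert series in terms of the squarefree counts $\rho_s(I\setminus J)$. Because $I$ and $J$ are both squarefree, a monomial $m$ lies in $I$ (resp.\ $J$) if and only if its support $\supp(m)$, viewed as a squarefree monomial, lies in $I$ (resp.\ $J$); thus $m\in I\setminus J$ if and only if $\supp(m)$ is a squarefree monomial of $I\setminus J$. Grouping the monomials of $I\setminus J$ according to their support $\sigma$ and summing the generating function $\bigl(T/(1-T)\bigr)^{|\sigma|}$ over all squarefree $\sigma\in I\setminus J$ gives
\[
H_{I/J}(T)=\sum_{s=d}^{n}\rho_s(I\setminus J)\,\frac{T^{s}}{(1-T)^{s}}.
\]
This identity is the heart of the argument; everything after it is bookkeeping.

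Multiplying through by $(1-T)^{t+1}$ yields
\[
(1-T)^{t+1}H_{I/J}(T)=\sum_{s=d}^{n}\rho_s(I\setminus J)\,T^{s}(1-T)^{t+1-s}.
\]
Each summand has lowest degree exactly $s$ in its Laurent expansion at $0$, so only the indices $d\le s\le k$ contribute to the coefficient of $T^{k}$, and for those indices $s\le t+1$ makes $(1-T)^{t+1-s}$ an honest polynomial. Extracting the coefficient of $T^{k}$ (for the chosen $k$ with $d+1\le k\le t+1$) I would obtain
\[
\sum_{s=d}^{k}(-1)^{k-s}\binom{t+1-s}{k-s}\rho_s(I\setminus J),
\]
which, after separating the $s=k$ term, equals $\rho_k(I\setminus J)+\sum_{j=d}^{k-1}(-1)^{k-j}\binom{t+1-j}{k-j}\rho_j(I\setminus J)$. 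Hypothesis \eqref{ineq_k} asserts precisely that this quantity is negative. Hence $(1-T)^{t+1}H_{I/J}(T)$ is not positive, so $\hdepth_1(I/J)\le t$, and the proof concludes as in the first paragraph.

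I expect the Hilbert-series identity of the second paragraph to be the only nonroutine step: one must notice that the squarefreeness of \emph{both} $I$ and $J$ makes membership in $I\setminus J$ depend only on the support, so that the ordinary standard graded Hilbert series is governed entirely by the squarefree data $\rho_s$. Once this is in hand, the match between the coefficient of $T^{k}$ and the right-hand side of \eqref{ineq_k} is a direct binomial computation, and the passage from non-positivity to the depth equality is supplied by \eqref{Uliczka-formula} together with the facts already recorded before the Setting.
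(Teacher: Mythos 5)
Your proposal is correct and follows essentially the same route as the paper: compute $H_{I/J}(T)=\sum_{s}\rho_s(I\setminus J)T^s/(1-T)^s$ from the observation that membership in $I\setminus J$ depends only on supports, extract the coefficient of $T^k$ in $(1-T)^{t+1}H_{I/J}(T)$, and note that \eqref{ineq_k} makes it negative, so $\hdepth_1(I/J)\le t$ by \eqref{Uliczka-formula} and the chain $\depth\le\sdepth_1\le\hdepth_1$ finishes the argument. Your extra remark that positivity of $(1-T)^uH_{I/J}(T)$ is preserved under decreasing $u$ is a worthwhile point of care that the paper leaves implicit.
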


\begin{proof}
  In the standard graded setting, we have the following relation:
  \[
  \depth(I/J)\le \sdepth_1(I/J)\le \hdepth_1(I/J).
  \]
  Here, the first inequality is due to \cite[2.7]{bruns-2009} while the second inequality is simply by definition. Now it suffices to show that if inequality \eqref{ineq_k} holds for some suitable $k$, then $\hdepth_1(I/J)\le t$.

  We will take use of the formula \eqref{Uliczka-formula}, and calculate the Hilbert series $H_{I/J}(T)$ directly.  Notice that the canonical image of a monomial $u\in I$ in $I/J$ is nonzero if and only if $\sqrt{u}:=\prod_{x_i| u}x_i \in I\setminus J$. Thus
  \[
  H_{I/J}(T)=\sum_{j=d}^n \frac{\rho_j(I\setminus J)\cdot T^j}{(1-T)^j}.
  \]

  If $\hdepth_1(I/J)\ge t+1$, then $(1-T)^{t+1} H_{I/J}(T)$ is positive. Thus, its coefficient at each degree $k$ is nonnegative. When $d+1\le k \le t+1$, this is equivalent to saying
  \begin{equation}
    \sum_{j=d}^{k}(-1)^{k-j}\binom{t+1-j}{k-j} \rho_{j}(I\setminus J)  \ge 0.
    \label{pos_k}
  \end{equation}
  Relating \eqref{pos_k} with \eqref{ineq_k}, we complete the proof.
\end{proof}

Obviously, when inequality \eqref{ineq_k} holds for $k=t+1$, we will recover Popescu's result. 

\begin{rem}
  \label{sd_exp}
  Theorem \ref{popescu-gen} can be established by using Stanley depth, if the Stanley conjecture \eqref{Stanley_conj} holds in this case. Notice that when $\sdepth(I/J)\ge t+1$, $P_{I\setminus J}$ has a partition $P_{I\setminus J}=\coprod_i [u_i,v_i]$ such that for every $u_i$ with $\deg(u_i)\le t+1$, one has $\deg(v_i)=t+1$. Let $a_j:=|\Set{u_i:\deg(u_i)=j}|$ for $d\le j \le n$. Then $\rho_{k}(I\setminus J)=\sum_{j=d}^k a_j\binom{t+1-d}{j}$ for $d\le k \le t+1$.  
  It is not difficult to deduce from this fact those inequalities of \eqref{pos_k}. 
\end{rem}

When $\depth(I/J)\ge t+1$, we will have $\hdepth_1(I/J)\ge d+1$. Thus in the proof of Theorem \ref{popescu-gen}, after applying $k=d+1$ in \eqref{pos_k}, we get $\rho_{d+1}(I\setminus J) \ge (t+1-d)\rho_d(I)$. Now, as opposed to \cite[1.5]{arXiv:1206.3977}, we can get 

\begin{cor}
  With the Setting \ref{setting1}, suppose that $\depth(I/J) \ge d+2$, then  
  \[
  2\rho_d(I)\le \rho_{d+1}(I\setminus J) \le \rho_d(I) + \rho_{d+2}(I\setminus J).
  \]
  Whence, the condition $\rho_{d+2}(I\setminus J)=0$ forces $\rho_d(I)=\rho_{d+1}(I\setminus J)=0$.
\end{cor}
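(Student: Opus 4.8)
The plan is to read both inequalities off the positivity of $(1-T)^{d+2}H_{I/J}(T)$, exactly in the spirit of the proof of Theorem \ref{popescu-gen}, and then feed the two resulting estimates into each other to obtain the final ``whence'' conclusion.

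First I would observe that the hypothesis $\depth(I/J)\ge d+2$, combined with the chain $\depth(I/J)\le \sdepth_1(I/J)\le \hdepth_1(I/J)$, forces $\hdepth_1(I/J)\ge d+2$. By Uliczka's formula \eqref{Uliczka-formula} this means $(1-T)^{d+2}H_{I/J}(T)$ is positive, so all of its coefficients are nonnegative. I would therefore invoke the coefficient inequality \eqref{pos_k} from the proof of Theorem \ref{popescu-gen} with the specialization $t=d+1$ (so that $t+1=d+2$), and extract the coefficients at the two lowest relevant degrees, $k=d+1$ and $k=d+2$.

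Next I would evaluate the binomial coefficients in \eqref{pos_k} at these two values of $k$. At $k=d+1$ the only surviving terms come from $j=d$ and $j=d+1$, with coefficients $-\binom{2}{1}=-2$ and $\binom{1}{0}=1$, yielding $\rho_{d+1}(I\setminus J)\ge 2\rho_d(I\setminus J)$. At $k=d+2$ the terms $j=d,d+1,d+2$ contribute $\binom{2}{2}=1$, $-\binom{1}{1}=-1$ and $\binom{0}{0}=1$, yielding $\rho_{d+1}(I\setminus J)\le \rho_d(I\setminus J)+\rho_{d+2}(I\setminus J)$. Since $J$ is generated in degrees $\ge d+1$, it contains no squarefree monomial of degree $d$, so $\rho_d(I\setminus J)=\rho_d(I)$; substituting this identity turns the two estimates into precisely the displayed double inequality.

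Finally, the ``whence'' statement is immediate: putting $\rho_{d+2}(I\setminus J)=0$ into the two inequalities gives $2\rho_d(I)\le \rho_{d+1}(I\setminus J)\le \rho_d(I)$, which forces $\rho_d(I)=0$ and hence $\rho_{d+1}(I\setminus J)=0$ as well. I do not anticipate a genuine obstacle here, since the whole argument is a specialization of the already-established inequality \eqref{pos_k}; the only two points that require care are getting the direction of the $\depth\le\hdepth_1$ comparison right (so that the positivity of $(1-T)^{d+2}H_{I/J}(T)$ is actually available) and the bookkeeping identity $\rho_d(I\setminus J)=\rho_d(I)$.
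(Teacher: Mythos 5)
Your proposal is correct and matches the paper's own (implicit) argument: the paper derives the corollary by specializing the coefficient inequality \eqref{pos_k} from the proof of Theorem \ref{popescu-gen} to $t=d+1$ at $k=d+1$ and $k=d+2$, exactly as you do, and your binomial bookkeeping and the identification $\rho_d(I\setminus J)=\rho_d(I)$ are both right.
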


\section{Stable ideals and squarefree stale ideals}
For each monomial $u\in S$, let $\m(u)$ be the maximal integer $i$ such that $x_i$ divides $u$. If $I\subset S$ is a nonzero monomial ideal, define $\m(I)=\max\Set{\m(u): u\in G(I)}$, where $G(I)$ is the set of minimal monomial generators of $I$.
\begin{enumerate}[a]
  \item A monomial ideal $I$ is called \Index{stable} if for all monomials $u\in I$ and $i<\m(u)$ one has $x_i(u/x_{\m(u)})\in I$. 
    A squarefree monomial ideal $I$ is called \Index{squarefree stable} if for all squarefree monomials $u\in I$ and for all $i<\m(u)$ such that $x_i$ does not divide $u$ one has $x_i(u/x_{\m(u)})\in I$.
  \item $I$ is called \Index{strongly stable} if one has $x_i(u/x_j)\in I$ for all monomials $u\in I$ and all $i<j$ such that $x_j$ divides $u$.
    A squarefree monomial ideal $I$ is called \Index{squarefree strongly stable} if for all squarefree monomials $u\in I$ and for all $j<i$ such that $x_i$ divides $u$ and $x_j$ does not divide $u$ one has $x_j(u/x_i)\in I$.
  \item A (squarefree) monomial ideal $I$ is called \Index{(squarefree) lexsegment} if for all (squarefree) monomials $u\in I$ and all (squarefree) monomials $v$ with $\deg v= \deg u$ and $v>_{\lex} u$ one has $v\in I$. A set $V$ of monomials in $S_d$ is \Index{lexsegment} if the homogeneous ideal $S\cdot V$ is a lexsegment ideal.
\end{enumerate}

Obviously we have the implication
\[
\text{(squarefree) lexsegment} \implies \text{(squarefree) strongly stable} \implies \text{(squarefree) stable}.
\]

\begin{lem}
  \label{depth1}
  Let $I\subset S$ be a monomial ideal.
  \begin{enumerate}[a]
    \item If $I$ is stable, then $\depth(I)=n+1-\m(I)$.
    \item If $I$ is squarefree stable, then $\depth(I)=\min\Set{n-\m(u)+\deg(u):u\in G(I)}$.
  \end{enumerate}
\end{lem}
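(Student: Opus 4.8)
The plan is to compute both depths through the Auslander--Buchsbaum formula $\depth(I)=n-\pd(I)$, after reading off the projective dimension from an explicit minimal free resolution. For a stable ideal the Eliahou--Kervaire resolution gives
\[
\beta_{i,i+j}(I)=\sum_{u\in G(I),\,\deg u=j}\binom{\m(u)-1}{i},
\]
so that $\pd(I)=\max\Set{\m(u)-1:u\in G(I)}=\m(I)-1$; substituting into Auslander--Buchsbaum yields $\depth(I)=n-(\m(I)-1)=n+1-\m(I)$, which is part (a). For a squarefree stable ideal the squarefree analogue of the Eliahou--Kervaire resolution (due to Aramova--Herzog--Hibi) replaces the binomial by a shifted one,
\[
\beta_{i,i+j}(I)=\sum_{u\in G(I),\,\deg u=j}\binom{\m(u)-j}{i},
\]
whence $\pd(I)=\max\Set{\m(u)-\deg u:u\in G(I)}$ and therefore $\depth(I)=n-\max\Set{\m(u)-\deg u:u\in G(I)}=\min\Set{n-\m(u)+\deg u:u\in G(I)}$, which is part (b). In both formulas every summand is nonnegative, so there is no cancellation and the largest homological degree carrying a nonzero Betti number is exactly the claimed maximum; this is the point that needs care.

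For a self-contained treatment, part (a) admits a short direct argument. Write $m=\m(I)$. Since every generator of a stable ideal involves only $x_1,\dots,x_m$, the ring $S/I$ is a polynomial extension $(S'/I_0)[x_{m+1},\dots,x_n]$ with $S'=\KK[x_1,\dots,x_m]$ and $I_0=I\cap S'$, so $x_{m+1},\dots,x_n$ form a regular sequence on $S/I$ and $\depth(S/I)=\depth_{S'}(S'/I_0)+(n-m)$. Choosing $u\in G(I)$ with $\m(u)=m$, the monomial $w=u/x_m$ lies outside $I_0$ by minimality of $u$, while stability forces $x_iw=x_i(u/x_{\m(u)})\in I_0$ for every $i<m$ and $x_mw=u\in I_0$; hence $w$ represents a nonzero socle element, $\depth_{S'}(S'/I_0)=0$, and $\depth(S/I)=n-m$. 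The short exact sequence $0\to I\to S\to S/I\to 0$ together with the depth lemma (and $m\ge1$) then gives $\depth(I)=\depth(S/I)+1=n+1-\m(I)$.

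Part (b) can be handled by the same recursive philosophy, using the exact sequence $0\to S/(I:x_n)\xrightarrow{\,x_n\,}S/I\to S/(I+(x_n))\to0$ and induction on $n$: one checks that $I+(x_n)$ and $I:x_n$ restrict to squarefree stable ideals in $\KK[x_1,\dots,x_{n-1}]$, tracks how the quantity $\min\Set{n-\m(u)+\deg u}$ transforms under passing to these two ideals, and applies the depth lemma at each step. The main obstacle, whichever route one takes, is the squarefree case: establishing (or invoking) the shifted Eliahou--Kervaire resolution and verifying that the top nonvanishing Betti number sits precisely in homological degree $\max\Set{\m(u)-\deg u}$, equivalently, controlling the bookkeeping of the two restricted ideals through the induction so that the minimum is attained and not overshot.
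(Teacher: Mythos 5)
Your proposal is correct and follows essentially the same route as the paper: both read off the projective dimension from the Eliahou--Kervaire resolution (respectively its squarefree analogue due to Aramova--Herzog--Hibi, which the paper cites via Herzog's survey) and then apply the graded Auslander--Buchsbaum formula together with the depth lemma. The extra self-contained socle argument you give for part (a) is a pleasant addition but does not change the substance of the approach.
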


\begin{proof}
  When $I$ is stable,
  \[
  \projdim S/I=\max\Set{\m(u):u\in G(I)}
  \]
  by \cite[3.4(b)]{MR1890097}. We apply the graded version of Auslander-Buchsbaum formula
  \[
  \depth S/I +\projdim S/I=n
  \]
  and the well-known depth lemma \cite[1.2.9]{MR1251956 } to get the desired formula for $\depth(I)$. The squarefree case can be treated similarly, by using \cite[3.6(b)]{MR1890097}.
\end{proof}

Given positive integers $a$ and $d$, let
\[
a=\binom{\lambda_d}{d} + \cdots + \binom{\lambda_k}{k},
\]
where $k\ge 1$ and $\lambda_d > \cdots > \lambda_k\ge k\ge 1$, be the $d$th \Index{Macaulay representation} of $a$. One defines
\[
a^{\MG(d)}=\binom{\lambda_d+1}{d} + \cdots + \binom{\lambda_k+1}{k}
\]
and $0^{\MG(d)}=0$. Related, one also defines
\[
\partial_{d-1}(a)=\binom{\lambda_d}{d-1}+\cdots+\binom{\lambda_k}{k-1}
\]
and $\partial_{d-1}(0)=0$.

Suppose $I$ is a lexsegment ideal, generated by monomials of degree $d$. Let $u$ be the minimal monomial in $I_d$ with respect to $>_{\lex}$. Then $u$ can be written as
\begin{equation}
  \label{mini_lex}
u=x_1^{a_0-1}x_2^{a_1-a_0}\cdots x_k^{a_{k-1}-a_{k-2}}x_{k+1}^{a_k-a_{k-1}+1} x_n^{d-a_k},
\end{equation}
where $0<a_0\le a_1\le \cdots \le a_k \le d$ and $0\le k \le n-2$. The $(n-1)$th Macaulay representation of $\dim_\KK(I_d)=\mu(I)$ is
\[
\mu(I)=\binom{d-a_0+n-1}{n-1}+\cdots + \binom{d-a_k+n-1-k}{n-1-k},
\]
by \cite[C.10]{iarrobino1999gotzmann}.  Furthermore, the homogeneous piece $I_{d+1}=S_1\cdot I_d$ is again lexsegment. Equation (14) of \cite{MR2434473} says that
\begin{equation}
  \label{HLL} 
  H(I,d+1)=H(I,d)^{\MG(n-1)}.
\end{equation}
It follows that the Hilbert function of $I$ is essentially determined by $\mu(I)=H(I,d)$, the minimal number of generators of $I$. 

\begin{cor}
  \label{lex_ideals}
  Suppose $I$ and $I'$ are two lexsegment ideals, generated in degree $d$ and $d'$ respectively, with $\mu(I)=\mu(I')$ and $d'\ge d$, then
  \begin{enumerate}[a]
    \item $I'=x_1^{d'-d}\cdot I$;
    \item the Hilbert series satisfies $H_{I'}(T)=T^{d'-d}H_{I}(T)$;
    \item the Hilbert function satisfies $H(I,d+\delta)=H(I',d'+\delta)$ for all $\delta\ge 0$;
    \item the Hilbert depth satisfies $\hdepth_1(I)=\hdepth_1(I')$ and $\hdepth_n(I)=\hdepth_n(I')$;
    \item the Stanley depth satisfies $\sdepth_1(I)=\sdepth_1(I')$ and $\sdepth_n(I)=\sdepth_n(I')$.
  \end{enumerate}
\end{cor}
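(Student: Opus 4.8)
The plan is to reduce everything to part (a), since (b)--(e) follow formally once we know that $I'=x_1^{d'-d}I$.

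First I would prove (a). Each of $I$ and $I'$ is generated by a lexsegment in a single degree, so $I_d$ is the set of the largest $\mu(I)$ monomials of $S_d$ in the order $>_{\lex}$, and likewise $I'_{d'}$ is the set of the largest $\mu(I')=\mu(I)$ monomials of $S_{d'}$. The key combinatorial observation is that the degree-$d'$ monomials divisible by $x_1^{d'-d}$ are exactly the top $\dim_\KK S_d=\binom{n+d-1}{n-1}$ monomials of $S_{d'}$: the lex-smallest monomial divisible by $x_1^{d'-d}$ is $x_1^{d'-d}x_n^d$, the lex-largest one not divisible by it is $x_1^{d'-d-1}x_2^{d+1}$, and $x_1^{d'-d}x_n^d >_{\lex} x_1^{d'-d-1}x_2^{d+1}$ since they already differ in the exponent of $x_1$. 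The multiplication map $v\mapsto x_1^{d'-d}v$ is an order-preserving bijection from $S_d$ onto this top block of $S_{d'}$; because $\mu(I)\le \dim_\KK S_d$, it carries the top $\mu(I)$ monomials of $S_d$ (namely $I_d$) onto the top $\mu(I)$ monomials of $S_{d'}$ (namely $I'_{d'}$). Hence $I'_{d'}=x_1^{d'-d}I_d$, and passing to the generated ideals gives $I'=x_1^{d'-d}I$. Alternatively one can compare the minimal monomials \eqref{mini_lex} of the two lexsegments directly: uniqueness of the $(n-1)$st Macaulay representation of the common value $\mu(I)$ forces the exponent data to agree after the shift $a_i\mapsto a_i+(d'-d)$, which is precisely multiplication by $x_1^{d'-d}$.

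Once (a) holds, multiplication by $x_1^{d'-d}$ is an isomorphism of graded $S$-modules $I\xrightarrow{\sim} I'$ raising degree by $d'-d$ in the standard grading and by $(d'-d)e_1$ in the multigrading; equivalently $I'\isom I(-(d'-d))$, resp.\ $I'\isom I(-(d'-d)e_1)$. Part (b) is then immediate, $H_{I'}(T)=T^{d'-d}H_I(T)$, and (c) follows by comparing the coefficient of $T^{d'+\delta}$ on the left with that of $T^{d+\delta}$ on the right. For (d) and (e) I would invoke the shift-invariance of both Hilbert depth and Stanley depth: any Hilbert (resp.\ Stanley) decomposition of $I$ is transported to one of $I'$ with the same algebra retracts $S_i$ by multiplying each piece by $x_1^{d'-d}$, and conversely (by (a) every element of $I'$ is divisible by $x_1^{d'-d}$), so the attainable depths coincide. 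In the standard-graded Hilbert case this is also transparent from \eqref{Uliczka-formula}: $(1-T)^u H_{I'}(T)=T^{d'-d}(1-T)^u H_I(T)$, and multiplying a Laurent series by the monomial $T^{d'-d}$ does not affect positivity, so the two maxima agree.

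The main obstacle is part (a), and within it the lexicographic fact that the block of degree-$d'$ monomials divisible by $x_1^{d'-d}$ sits entirely above its complement; this is exactly what lets the shift $v\mapsto x_1^{d'-d}v$ identify the two lexsegments. Everything after (a) is bookkeeping with the Hilbert series and with the shift-invariance of the two depth invariants.
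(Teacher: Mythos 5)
Your proposal is correct and follows essentially the same route as the paper: everything is reduced to part (a), which is established by showing that multiplication by $x_1^{d'-d}$ identifies the two lex segments, after which (b)--(e) are formal consequences of the degree-shifted isomorphism $I'\isom I(-(d'-d))$. Your primary justification of (a) (the order-preserving bijection of $S_d$ onto the top lex block of $S_{d'}$) is a mild, if anything more self-contained, variant of the paper's argument, which compares the minimal elements of $I_d$ and $I'_{d'}$ via the explicit form \eqref{mini_lex} and the Macaulay representation --- an alternative you yourself already record.
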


\begin{proof}
  We only need to show part (a). Now, suppose $u$ in \eqref{mini_lex} is the minimal element in $I_d$ with respect to $>_{\lex}$.
  Since $\mu(I)=\mu(I')$, $x_1^{d'-d}u$ is the minimal element of $I_{d'}'$. For each monomial $v\in I_d$, one has $v>_{\lex} u$ in $I_d$ and thus $x_1^{d'-d}v>_{\lex} x_1^{d'-d}u$ in $I_{d'}'$. This is equivalent to saying that $x_1^{d'-d}I_d\subseteq I_{d'}'$. Since these two sets have the same cardinality, they must coincide.
\end{proof}

Let $V$ be a subspace of the $\KK$-vector space $S_d$. We write $\lex(V)\subset S_d$ for the $\KK$-vector space spanned by the lexsegment set $L\subset S_d$ of monomials with $|L|=\dim_\KK V$. The set $V$ is called a \Index{Gotzmann space} if $\dim_\KK(S_1 \cdot V)=\dim_\KK(S_1 \cdot \lex(V))$. The Gotzmann Persistence Theorem \cite{MR0480478} says that if $V$ is a Gotzmann space, then $S_1\cdot V$ is also a Gotzmann space. A homogeneous ideal $I$ of $S$ is \Index{Gotzmann} if $I_k$ is Gotzmann for all $k$. Lexsegment ideals are obviously Gotzmann.

\begin{cor}
  Suppose $I$ and $I'$ are Gotzmann ideals of $S$, and generated in degrees $d$ and $d'$ respectively, with $\mu(I)=\mu(I')$, then
  \begin{enumerate}[a]
    \item the Hilbert series satisfies $H_I(T)=T^{d-d'}H_{I'}(T)$;
    \item the Hilbert function satisfies $H(I,d+\delta)=H(I',d'+\delta)$ for all $\delta\ge 0$;
    \item the Hilbert depth satisfies $\hdepth_1(I)=\hdepth_1(I')$.
  \end{enumerate}
\end{cor}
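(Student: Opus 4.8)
The plan is to reduce everything to the lexsegment case already settled in Corollary~\ref{lex_ideals}. The bridge is the following claim: if $I$ is a Gotzmann ideal generated in degree $d$, and $L$ denotes the lexsegment ideal generated in degree $d$ with $\mu(L)=\mu(I)$, then $H(I,d+\delta)=H(L,d+\delta)$ for all $\delta\ge 0$. Granting this claim, I would let $L$ and $L'$ be the lexsegment ideals generated in degrees $d$ and $d'$ with $\mu(L)=\mu(I)=\mu(I')=\mu(L')$; then Corollary~\ref{lex_ideals}(c) gives $H(L,d+\delta)=H(L',d'+\delta)$, and the claim applied to both $I$ and $I'$ yields $H(I,d+\delta)=H(L,d+\delta)=H(L',d'+\delta)=H(I',d'+\delta)$, which is exactly part (b).

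To prove the claim I would argue by induction on $\delta$, carrying along the stronger statement that not only $H(I,d+\delta)=H(L,d+\delta)$ but also $\lex(I_{d+\delta})=L_{d+\delta}$ as sets of monomials. The base case $\delta=0$ holds because $I_d$ and $L_d$ both have dimension $\mu(I)$, so their lexsegment replacements, being determined by cardinality, agree. For the inductive step, since $I$ is Gotzmann the space $I_{d+\delta}$ is a Gotzmann space, whence $\dim_\KK(S_1\cdot I_{d+\delta})=\dim_\KK(S_1\cdot\lex(I_{d+\delta}))=\dim_\KK(S_1\cdot L_{d+\delta})=H(L,d+\delta+1)$, using the inductive hypothesis $\lex(I_{d+\delta})=L_{d+\delta}$ together with $L_{d+\delta+1}=S_1\cdot L_{d+\delta}$ (valid because $L$ is generated in degree $d$). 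As $I_{d+\delta+1}=S_1\cdot I_{d+\delta}$ for the same reason, this gives $H(I,d+\delta+1)=H(L,d+\delta+1)$; two lexsegment sets of equal cardinality coincide, so $\lex(I_{d+\delta+1})=L_{d+\delta+1}$ as well, closing the induction. Conceptually this is precisely what the Gotzmann Persistence Theorem guarantees: the Gotzmann condition propagates upward, so the Hilbert growth of $I$ beyond degree $d$ matches the maximal lexsegment growth recorded in \eqref{HLL}.

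With part (b) in hand, parts (a) and (c) are formal. For (a) I would write $H_I(T)=\sum_{\delta\ge 0}H(I,d+\delta)T^{d+\delta}=T^{d}\sum_{\delta\ge 0}H(I,d+\delta)T^{\delta}$ and likewise $H_{I'}(T)=T^{d'}\sum_{\delta\ge 0}H(I',d'+\delta)T^{\delta}$; since the two coefficient sums agree by (b), factoring gives $H_I(T)=T^{d-d'}H_{I'}(T)$. For (c) I would invoke Uliczka's formula \eqref{Uliczka-formula}: for any integer $u$ we have $(1-T)^{u}H_I(T)=T^{d-d'}\bigl((1-T)^{u}H_{I'}(T)\bigr)$, and multiplying a rational function by an integer power of $T$ merely shifts the exponents in its Laurent expansion, hence preserves positivity. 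Therefore $(1-T)^{u}H_I(T)$ is positive exactly when $(1-T)^{u}H_{I'}(T)$ is, so the two maxima defining $\hdepth_1(I)$ and $\hdepth_1(I')$ coincide.

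The main obstacle is the inductive claim in the second paragraph; everything else is bookkeeping. The delicate point there is to keep the lexsegment identification $\lex(I_{d+\delta})=L_{d+\delta}$ synchronized with the dimension count, so that the Gotzmann condition at degree $d+\delta$ can be transferred to the reference ideal $L$, whose higher Hilbert values are the ones controlled by \eqref{HLL} and Corollary~\ref{lex_ideals}.
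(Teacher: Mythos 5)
Your proposal is correct and follows exactly the route the paper intends: the corollary is stated without proof, immediately after the definition of Gotzmann spaces and the observation that lexsegment ideals are Gotzmann, precisely because the Hilbert function of a Gotzmann ideal generated in a single degree is forced, degree by degree, to agree with that of the reference lexsegment ideal of the same $\mu$, after which Corollary~\ref{lex_ideals} and Uliczka's formula \eqref{Uliczka-formula} finish parts (a)--(c). Your induction keeping $\lex(I_{d+\delta})=L_{d+\delta}$ synchronized with the dimension count is the right way to make the omitted reduction rigorous.
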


Here is another direct application of \eqref{Uliczka-formula}:

\begin{lem}
  \label{hdepth_nec}
  Let $I$ be a monomial ideal of $S$, generated by monomials of degrees $\ge d$. If $\hdepth_1(I)\ge k\in \NN$, then $H(I,d+1)\ge k\cdot H(I,d)$.
\end{lem}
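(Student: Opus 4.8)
The plan is to apply Uliczka's formula \eqref{Uliczka-formula} directly and then read off a single low-degree coefficient. Since $\hdepth_1(I)\ge k$, the formula tells us that the rational function $(1-T)^k H_I(T)$ is positive, i.e., every coefficient of its Laurent expansion at $0$ is nonnegative. My strategy is to compute $H_I(T)$ explicitly enough near its lowest degree and then isolate the coefficient of $T^{d+1}$ in the product $(1-T)^k H_I(T)$.

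First I would record that, because $I$ is generated by monomials of degrees $\ge d$, one has $H(I,j)=0$ for all $j<d$, so
\[
H_I(T)=\sum_{j\ge d} H(I,j)\,T^j = H(I,d)\,T^d + H(I,d+1)\,T^{d+1}+\cdots.
\]
Expanding $(1-T)^k=\sum_{i\ge 0}\binom{k}{i}(-1)^i T^i$ and multiplying, the lowest degree occurring in $(1-T)^k H_I(T)$ is $T^d$, with coefficient $H(I,d)$. The coefficient of $T^{d+1}$ receives contributions only from the pairs $(i,j)$ with $i+j=d+1$ and $j\ge d$, namely $(0,d+1)$ and $(1,d)$; hence it equals
\[
H(I,d+1)-\binom{k}{1}H(I,d)=H(I,d+1)-k\,H(I,d).
\]

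Positivity of $(1-T)^k H_I(T)$ then forces this coefficient to be nonnegative, which is exactly the asserted inequality $H(I,d+1)\ge k\cdot H(I,d)$. I expect no genuine obstacle here beyond bookkeeping: the only points to verify are that no terms of degree below $d$ interfere (immediate from the vanishing of $H(I,j)$ for $j<d$) and that the degree-$(d+1)$ coefficient collects precisely the two contributions above. In fact the same method would, coefficient by coefficient, reproduce the full family of inequalities \eqref{pos_k} appearing in the proof of Theorem \ref{popescu-gen}; here we simply need the first nontrivial one.
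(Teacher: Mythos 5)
Your argument is correct and is essentially identical to the paper's own proof: both expand $(1-T)^k H_I(T)$, observe that the coefficient of $T^{d+1}$ equals $H(I,d+1)-k\cdot H(I,d)$, and conclude nonnegativity from Uliczka's positivity criterion \eqref{Uliczka-formula}. No further comment is needed.
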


\begin{proof}
  The Hilbert series of $I$ is
  \[
  H_I(T)=H(I,d)T^d + H(I,d+1)T^{d+1}+\cdots.
  \]
  Direct computation shows that
  \[
  (1-T)^k H_I(T)= H(I,d)T^d + (H(I,d+1)-k \cdot H(I,d))T^{d+1} + \cdots.
  \]
  If $\hdepth_1(I)\ge k$, the coefficient $H(I,d+1)-k \cdot H(I,d)\ge 0$ by \eqref{Uliczka-formula}.
\end{proof}

\begin{conj}
  \label{conj1}
  Let $I$ be a stable ideal in $S$ of degree $d$, then $\hdepth_1(I)=\floor{H(I,d+1)/H(I,d)}$.
\end{conj}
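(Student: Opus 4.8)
The plan is to split the claimed equality into the two one-sided bounds $\hdepth_1(I)\le \floor{H(I,d+1)/H(I,d)}$ and $\hdepth_1(I)\ge \floor{H(I,d+1)/H(I,d)}$, where the first is immediate and the second carries all the content. For the upper bound I would simply invoke Lemma~\ref{hdepth_nec}: writing $k=\hdepth_1(I)$, that lemma gives $H(I,d+1)\ge k\,H(I,d)$, hence $k\le H(I,d+1)/H(I,d)$, and since $k\in\NN$ this forces $k\le \floor{H(I,d+1)/H(I,d)}$. Note this half is characteristic-free and uses nothing about stability beyond $I$ being generated in the single degree $d$.

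For the lower bound, set $k=\floor{H(I,d+1)/H(I,d)}$. By Uliczka's formula~\eqref{Uliczka-formula} it suffices to show $(1-T)^{k}H_I(T)$ is positive. Here I would use the standard (Eliahou--Kervaire) decomposition of a stable ideal, in which every monomial of $I$ is written uniquely as $u\cdot v$ with $u\in G(I)$ and $v\in\KK[x_1,\dots,x_{\m(u)}]$. Writing $c_i=|\Set{u\in G(I):\m(u)=i}|$ and using that all generators lie in degree $d$, this yields
\[
H_I(T)=T^{d}\sum_{i=1}^{n}\frac{c_i}{(1-T)^{i}},
\]
so that $H(I,d)=\sum_i c_i$, $H(I,d+1)=\sum_i i\,c_i$, and more generally $H(I,d+e)=\sum_i c_i\binom{e+i-1}{i-1}$ is a genuine polynomial in $e$ for $e\ge 0$. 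Positivity of $(1-T)^{k}H_I(T)$ is then equivalent to the nonnegativity, for every $e\ge 0$, of
\[
D_e:=\sum_{j=0}^{k}(-1)^{j}\binom{k}{j}H(I,d+e-j),
\]
with the convention $H(I,m)=0$ for $m<d$.

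The next step is to dispose of the easy ranges of $e$. For $e=0$ one has $D_0=H(I,d)>0$; for $e=1$, $D_1=H(I,d+1)-k\,H(I,d)\ge 0$ by the very choice of $k$. For $e\ge k$ every argument $d+e-j$ is $\ge d$, so $D_e$ is the genuine $k$-th finite difference of the polynomial $\sum_i c_i\binom{e+i-1}{i-1}$, which evaluates to $\sum_{i\ge k+1}c_i\binom{e+i-1-k}{i-1-k}\ge 0$. Thus the entire problem collapses onto the intermediate band $2\le e\le k-1$.

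The hard part — and the reason the averaging inequality alone is insufficient — is precisely this band. The condition $\sum_i(i-k)c_i\ge 0$ (that is, $D_1\ge 0$) does \emph{not} by itself force $D_e\ge 0$ for $2\le e\le k-1$: concentrating the $c_i$ at one small index and one large index with a wide gap between them can make, say, $D_3$ negative. What saves the statement is that such gaps are impossible for a genuine stable ideal: since $x_1^{d}\in I$ and stability lets one lower the top variable one step at a time, $c_i>0$ for every $1\le i\le \m(I)$, and in fact stability forces quantitative lower bounds on the partial sums $b_i=c_1+\cdots+c_i$. My plan is to isolate the exact inequality on $(c_i)$ that stability guarantees — I expect a discrete-convexity or domination statement for the $b_i$ — and to show it implies $D_e\ge 0$ throughout the band, proving it by induction on $\m(I)$: one peels off the top level using the auxiliary stable ideal generated by $\Set{u\in I_d:\m(u)\le \m(I)-1}$, whose Hilbert series differs from that of $I$ by the single term $c_{\m(I)}T^{d}/(1-T)^{\m(I)}$. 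Tracking how $k$ and the intermediate differences change under this peeling is the step I expect to be the genuine obstacle; an alternative route, more in the spirit of Section~4, would be to transport the question through the Kalai correspondence to the squarefree side (cf.\ Conjecture~\ref{conj2}) and compare with the lexsegment case settled in Theorem~\ref{hdepth_lex}.
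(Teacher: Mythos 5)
The statement you are trying to prove is stated in the paper as Conjecture~\ref{conj1}; the paper offers no proof of it, only two special cases in which it is known to hold (powers of the maximal ideal, and lexsegment ideals with $\mu(I)>\xi_{n-1}$ via Theorem~\ref{hdepth_lex}). So there is no argument in the paper to compare yours against, and your proposal itself does not close the conjecture: you explicitly leave the positivity of the intermediate differences $D_e$ for $2\le e\le k-1$ as an unidentified ``discrete-convexity or domination statement for the $b_i$'' that you \emph{expect} stability to supply. That is precisely the content of the conjecture, not a routine verification, so as written the proposal is a plan with the decisive step missing. The upper bound $\hdepth_1(I)\le \floor{H(I,d+1)/H(I,d)}$ via Lemma~\ref{hdepth_nec} is correct and is indeed the easy half.

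There is also a concrete error in the half you do set up. The Eliahou--Kervaire decomposition of a stable ideal writes each monomial uniquely as $uv$ with $u\in G(I)$ and $v\in\KK[x_{\m(u)},\dots,x_n]$, not $v\in\KK[x_1,\dots,x_{\m(u)}]$; with your convention the pieces do not cover $I$ (for $I=(x_1,x_2)^2\subset\KK[x_1,x_2,x_3]$ the monomial $x_1^2x_3$ lies in no piece). Consequently the displayed Hilbert series should read
\[
H_I(T)=T^{d}\sum_{i=1}^{n}\frac{c_i}{(1-T)^{\,n-i+1}},\qquad c_i=\bigl|\Set{u\in G(I):\m(u)=i}\bigr|,
\]
so that $H(I,d+1)=\sum_i (n+1-i)c_i$ rather than $\sum_i i\,c_i$ (your formula already fails for $(x_1,x_2)^2$ in $\KK[x_1,x_2]$, giving $5$ instead of $4$). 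The subsequent bookkeeping for $D_e$ in the ranges $e\in\{0,1\}$ and $e\ge k$ survives this correction with the roles of small and large $\m(u)$ interchanged, but the gap in the band $2\le e\le k-1$ remains, and that is where the conjecture actually lives.
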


This conjecture holds at least in the following two cases:
\begin{enumerate}[a]
  \item $I=(x_1,\dots,x_n)^d$ is the power of the graded maximal ideal, by \cite[1.2]{arXiv:1002.1400}.
  \item $I$ is lexsegment with $\hdepth_1(I)=1$, i.e., this lexsegment ideal satisfies $\mu(I)>\xi_{n-1}:=\sum_{j=1}^{n-1}\binom{2j-1}{j}$ by Theorem \ref{hdepth_lex}.
\end{enumerate}

\begin{exam}
  The Conjecture \ref{conj1} fails if $I$ is not stable. For example, one can take $I=(x^2,y^2)\subset \KK[x,y]$. Since $xy\not\in I$, $I$ is not stable. The Hilbert series of $I$ is
  \[
  H_I(T)=2T^2+ \sum_{j\ge 3}(j+1)T^j.
  \]
  Thus, $\hdepth_1(I)=1 < H(I,3)/H(I,2)=2$.
\end{exam}

\section{Shifting operations}

Let us review a pair of operators $(\sigma,\tau)$, due to Gil Kalai, that relates monomials with squarefree monomials in a larger polynomial ring. If $u=x_{i_1}x_{i_2}\cdots x_{i_d}$, where $i_1\le i_2 \le \cdots \le i_d$, we set $u^\sigma=x_{i_1}x_{i_2+1}\cdots x_{i_j+(j-1)}\cdots x_{i_d+(d-1)}$. The operator $\sigma : u\mapsto u^\sigma$ will be called the \Index{squarefree operator}. Its inverse $\tau$ is the map which associate each squarefree monomial $v=x_{i_1}\cdots x_{i_d}$, where $i_1<\cdots < i_d$ with the monomial $v^\tau=x_{i_1}x_{i_2-1}\cdots x_{i_j-(j-1)}\cdots x_{i_d-(d-1)}$. It is clear that the pair $(\sigma,\tau)$ establishes a bijection between $A_{n,d}$, the set of monomials in the variables $x_1,\dots,x_n$ of degree $d$, with $B_{n+d-1,d}$, the set of squarefree monomials in the variables $x_1,\dots,x_{n+d-1}$ of degree $d$.

If $I\subset S$ is a monomial ideal with $G(I)=\Set{u_1,\dots,u_s}$, we write $I^\sigma$ for the squarefree monomial ideal generated by the monomials $u_1^\sigma,\dots,u_s^\sigma$ in $\KK[x_1,\dots,x_m]$ where $m=\max\Set{\m(u)+\deg(u)-1:u\in G(I)}$.  Similarly, if $I\subset S$ is a squarefree monomial ideal with $G(I)=\Set{v_1,\dots,v_s}$, we write $I^\tau$ for the monomial ideal generated by $v_1^\tau,\dots,v_s^\tau$ in $S$. The operators $(\sigma,\tau)$ establishes a bijection between strongly stable ideals and squarefree strongly stable ideals:

\begin{lem}
  [{\cite[1.2, 1.4]{MR1803232}}]
  If $I\subset S$ is a strongly stable ideal with $G(I)=\Set{u_1,\dots,u_s}$, $I^\sigma$ is a squarefree strongly stable ideal with $G(I^\sigma)=\Set{u_1^\sigma,\dots,u_s^\sigma}$.  Conversely, if $I$ is a squarefree strongly stable ideal with $G(I)=\Set{v_1,\dots,v_s}$, $I^\tau$ is a strongly stable ideal with $G(I^\tau)=\Set{v_1^\tau,\dots,v_s^\tau}$.
\end{lem}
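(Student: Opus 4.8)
The plan is to reduce the entire statement to a single translation principle: for a strongly stable ideal $I$, a squarefree monomial $w$ lies in $I^\sigma$ if and only if $w^\tau\in I$; symmetrically, for a squarefree strongly stable ideal $I$, a monomial $v$ lies in $I^\tau$ if and only if $v^\sigma\in I$. Once this equivalence is in hand, every assertion of the lemma follows. Squarefree strong stability of $I^\sigma$ is read off by transporting a squarefree exchange move through $\tau$, and the identity $G(I^\sigma)=\{u_1^\sigma,\dots,u_s^\sigma\}$ follows because $\sigma$ will be shown to carry minimal elements to minimal elements. The two halves of the lemma are mirror images under $\sigma\leftrightarrow\tau$, so it suffices to treat the first and then invoke symmetry; the bookkeeping placing $I^\sigma$ in $\KK[x_1,\dots,x_m]$ with $m=\max\{\m(u)+\deg(u)-1 : u\in G(I)\}$ is routine, since $m$ is exactly the largest variable index occurring among the $u_i^\sigma$.

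The clean engine behind the argument is an order isomorphism. On the monomials of a fixed degree $d$ put the Borel order in which $v\le u$ means that the nondecreasing index sequence of $v$ is dominated coordinatewise by that of $u$; among monomial ideals, the strongly stable ones are exactly those whose degree-$d$ component is a down-set for every $d$. Equip the squarefree monomials of degree $d$ with the analogous order coming from their strictly increasing support sequences. Since $\sigma$ sends $(i_1,\dots,i_d)$ to $(i_1,i_2+1,\dots,i_d+(d-1))$ and the assignment $i_k\mapsto i_k+(k-1)$ preserves coordinatewise domination while bijecting nondecreasing with strictly increasing sequences, $\sigma$ is an order isomorphism of these two Borel posets, with inverse $\tau$. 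I will also use the standard reduction that both stability conditions may be checked on minimal generators alone: any monomial of the ideal is a multiple (squarefree, in the squarefree case) of a generator, and an exchange move acts either inside that generator or inside the extra factor, landing back in the ideal either way. Granting the translation principle, squarefree strong stability of $I^\sigma$ is immediate: a squarefree down-move sends $u_i^\sigma$ to a smaller element of the squarefree Borel order, which corresponds under $\tau$ to an element below $u_i$, hence to an element of $I$ by strong stability, so the moved monomial lies in $I^\sigma$; and $G(I^\sigma)=\{u_i^\sigma\}$ follows because, under the equivalence together with the order isomorphism, the $\sigma$-images of the minimal elements of $I$ are precisely the minimal elements of $I^\sigma$ (here minimality genuinely uses strong stability, not merely the bijectivity of $\sigma$).

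The real work — and the main obstacle — is the translation principle itself, which I would prove by induction on the degree gap $e=\deg(w)-\deg(u)$ between $w$ and a generator $u$ with $u^\sigma\mid w$. When $e=0$ there is nothing to show. For the inductive step one peels off the largest variable of $w$ not belonging to $u^\sigma$, producing a squarefree $w''$ with $u^\sigma\mid w''$ and $(w'')^\tau\in I$. If the removed variable is the global maximum of the support of $w$, then passing through $\tau$ merely multiplies $(w'')^\tau$ by a single variable, so $w^\tau\in I$ trivially. The delicate case is when this extra variable must be inserted in the interior of the support: there $\tau$ redistributes exponents, lowering several index positions by one, and I must invoke strong stability — closure under down-moves — to guarantee that each intermediate monomial, and hence $w^\tau$, still lies in $I$. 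This absorption of the shift-induced exponent redistribution is exactly the step that fails for a general monomial ideal and that makes strong stability indispensable; it is the crux of the proof. The reverse inclusion, that $w^\tau\in I$ forces $u^\sigma\mid w$ for some generator, is handled by the symmetric induction using $\sigma$, and the converse half of the lemma is obtained by interchanging the roles of $\sigma$ and $\tau$ throughout.
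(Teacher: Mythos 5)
This lemma is not proved in the paper at all: it is quoted verbatim from Aramova--Herzog--Hibi \cite{MR1803232}, so there is no in-paper argument to measure you against, and your proposal has to stand on its own. Its architecture is sound and is the standard one: reduce everything to the translation principle ``a squarefree $w$ lies in $I^{\sigma}$ iff $w^{\tau}\in I$'', note that $\sigma$ and $\tau$ are inverse order isomorphisms for the coordinatewise (Borel) orders, and prove the principle by peeling off one variable at a time. Your forward implication is correct and correctly locates the crux: deleting a variable of $w$ at position $p$ shifts the $\tau$-indices of all later positions \emph{down} by one, and these decrements are exactly the exchange moves that strong stability absorbs, so $w^{\tau}$ is a single variable times a monomial reachable from $(w'')^{\tau}$ by down-moves. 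Your observation that minimality of the $u_i^{\sigma}$ needs strong stability is also right, and it falls out of the same induction (in the inductive step $w^{\tau}$ is exhibited as a proper multiple of an element of $I$).

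The gap is the reverse implication, which you dispose of as ``the symmetric induction using $\sigma$''. It is not symmetric. If $v=w^{\tau}\in I$ and you peel a variable of $v$ sitting at position $p$ of its sorted index sequence, then in passing from $(v'')^{\sigma}$ to $v^{\sigma}$ the support elements coming from positions after $p$ are shifted \emph{up} by one; knowing that some $u_j^{\sigma}$ divides $(v'')^{\sigma}$ therefore only produces a subset of $\supp(v^{\sigma})$ dominating $\supp(u_j^{\sigma})$, and strong stability gives closure downward, not upward, so the mirror argument stalls. Concretely, for $I=(x_1^2,x_1x_2,x_2^2)$ and $v=x_1x_2^2$, peeling $x_1$ leaves $v''=x_2^2$ with $(v'')^{\sigma}=x_2x_3$, yet $x_2x_3\nmid v^{\sigma}=x_1x_3x_4$; the generator that actually works, $x_1x_2$, is a different one. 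The repair is to always peel the \emph{largest} variable $x_{\m(v)}$, so that no index gets shifted and $\supp((v'')^{\sigma})\subseteq\supp(v^{\sigma})$ on the nose; this in turn needs the auxiliary fact, itself a consequence of stability, that $v/x_{\m(v)}\in I$ whenever $v\in I\setminus G(I)$. With that inserted, your induction closes and the rest of your plan (squarefree strong stability of $I^{\sigma}$ via the order isomorphism, identification of $G(I^{\sigma})$, and the parallel argument for $\tau$) goes through.
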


The bijection above restricts to a bijection between lexsegment ideals and squarefree lexsegment ideals:

\begin{lem}
  [{\cite[1.8]{MR1803232}}]
  \label{lex_sqf_lex}
  If $I \subset S$ is a lexsegment ideal, then $I^\sigma$ is a squarefree lexsegment ideal.
  Conversely, if $I$ is a squarefree lexsegment ideal in $S$, then $G(I^\tau) \subseteq S'=\KK[x_1,\dots,x_m]$ where $m=\max\Set{\m(u)-\deg(u)+1: u\in G(I)}$, and $I^\tau \cap S'$ is a lexsegment ideal in $S'$.
\end{lem}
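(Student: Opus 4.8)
The plan is to reduce everything to a single order-theoretic fact about the Kalai operators and then transport the lexsegment property across the bijection between strongly stable and squarefree strongly stable ideals established just above. Since a (squarefree) lexsegment ideal is in particular (squarefree) strongly stable, that bijection already gives $G(I^\sigma)=\Set{u^\sigma:u\in G(I)}$ in the first case and $G(I^\tau)=\Set{v^\tau:v\in G(I)}$ in the second; so only the \emph{lexsegment} refinement remains, and for that I only need to control how $\sigma$ and $\tau$ interact with $>_{\lex}$.

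First I would prove the key lemma: for monomials $u,w$ of the same degree $d$, one has $u>_{\lex}w$ if and only if $u^\sigma>_{\lex}w^\sigma$, and dually for $\tau$ on squarefree monomials. Encode a degree-$d$ monomial $u$ by its weakly increasing index sequence $\mathbf i(u)=(i_1\le\cdots\le i_d)$; for monomials of a fixed degree, $u>_{\lex}w$ is equivalent to $\mathbf i(u)$ being lexicographically \emph{smaller} than $\mathbf i(w)$. The operator $\sigma$ acts on index sequences by $(i_1,\dots,i_d)\mapsto(i_1,i_2+1,\dots,i_d+(d-1))$, that is, by adding a fixed, position-dependent constant in each coordinate. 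Such a coordinatewise shift preserves the lexicographic comparison of sequences verbatim, so $\mathbf i(u)<\mathbf i(w)$ iff $\mathbf i(u^\sigma)<\mathbf i(w^\sigma)$, which is exactly the claim. Thus $\sigma\colon(A_{n,d},>_{\lex})\to(B_{n+d-1,d},>_{\lex})$ is an order isomorphism with inverse $\tau$.

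Next I would record a generator-level criterion: a strongly stable ideal $I$ is lexsegment if and only if, for every $u\in G(I)$, every monomial $v$ with $\deg v=\deg u$ and $v>_{\lex}u$ lies in $I$ (the squarefree analogue reads the same with squarefree monomials). The nontrivial direction I would prove by induction on the degree $e$, using the classical persistence fact that multiplying an initial lex-segment in $S_e$ by $S_1$ again yields an initial lex-segment in $S_{e+1}$ (Kruskal--Katona in the squarefree case). At each step $I_{e+1}$ is the union of $S_1\cdot I_e$ (an initial lex-segment by induction and persistence) with the degree-$(e+1)$ generators; the generator hypothesis forces, for each such generator $g$, the whole set $\Set{v:\deg v=e+1,\ v\ge_{\lex}g}$ into $I_{e+1}$, and a union of initial lex-segments is again one, so $I_{e+1}$ is lexsegment. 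With this criterion the forward direction is immediate: $I^\sigma$ is squarefree strongly stable, and for each generator $u^\sigma$ the order isomorphism maps $\Set{v:\deg v=\deg u,\ v\ge_{\lex}u}\subseteq I$ bijectively onto $\Set{v':\deg v'=\deg u,\ v'\ge_{\lex}u^\sigma}$; hence every squarefree monomial lex-above a generator of $I^\sigma$ already lies in $I^\sigma$, so $I^\sigma$ is squarefree lexsegment.

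For the converse I would first verify the variable bound: if $v=x_{i_1}\cdots x_{i_d}\in G(I)$ then the largest index occurring in $v^\tau$ is $i_d-(d-1)=\m(v)-\deg(v)+1$, so $G(I^\tau)\subseteq S'=\KK[x_1,\dots,x_m]$ with $m=\max\Set{\m(v)-\deg(v)+1:v\in G(I)}$, and $I^\tau\cap S'$ is the ideal generated in $S'$ by these $v^\tau$, hence strongly stable in $S'$. Applying the generator criterion in $S'$ and running the transport in reverse (a monomial $w'\in S'$ with $w'>_{\lex}v^\tau$ has $(w')^\sigma>_{\lex}(v^\tau)^\sigma=v$, so $(w')^\sigma\in I$ by the lexsegment property of $I$, whence $w'=((w')^\sigma)^\tau\in I^\tau\cap S'$) shows $I^\tau\cap S'$ is lexsegment. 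I expect the main obstacle to be the generator-level criterion together with the bookkeeping of ambient variables: one must ensure that in the converse the shifted monomials $(w')^\sigma$ stay within the variable range in which the lexsegment property of $I$ is available, which is exactly what the formula for $m$ guarantees but requires a careful (if routine) check. Everything else follows directly from the order isomorphism and the strongly stable bijection already in hand.
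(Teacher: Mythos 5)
The paper does not prove this lemma at all --- it is quoted verbatim from Aramova--Herzog--Hibi \cite{MR1803232}*{1.8} --- so your attempt can only be judged on its own terms. Your overall strategy (the observation that $\sigma$ adds the fixed vector $(0,1,\dots,d-1)$ to index sequences and hence is a lex-order isomorphism $A_{n,d}\to B_{n+d-1,d}$, combined with a generator-level criterion for being lexsegment proved by induction on the degree via persistence of lex segments under multiplication by $S_1$) is sound, and it does give a complete proof in the case the paper actually uses, namely lexsegment ideals generated in a single degree $d$: there $I_d=G(I)$ spans the degree-$d$ part, every monomial lex-above a generator of the same degree is itself a generator, and the variable counts $m=\m(u)+d-1$ resp.\ $m=\m(u)-d+1$ line up exactly.

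For general lexsegment ideals, however, there are two genuine gaps. First, in the forward direction you pass from ``$(v')^\tau\in I_d$'' to ``$v'\in I^\sigma$'' for monomials $(v')^\tau$ that need not be minimal generators; this implication is true but not free --- it requires the Eliahou--Kervaire canonical decomposition $w=g\cdot h$ with $g\in G(I)$ and $\m(g)\le \min\supp(h)$, under which $w^\sigma$ is visibly divisible by $g^\sigma$. Second, and more seriously, your closing claim that the range condition ``is exactly what the formula for $m$ guarantees'' is false when generators occur in several degrees. Take $I=(x_1x_2,x_1x_3,x_1x_4,x_2x_3x_4)\subset S=\KK[x_1,\dots,x_4]$, which is squarefree lexsegment with $m=\max\Set{\m(u)-\deg(u)+1}=3$ and $I^\tau\cap S'=(x_1^2,x_1x_2,x_1x_3,x_2^3)$ in $S'=\KK[x_1,x_2,x_3]$. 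Applying your converse argument to the generator $x_2^3=(x_2x_3x_4)^\tau$ and the monomial $w'=x_1x_3^2>_{\lex}x_2^3$ gives $(w')^\sigma=x_1x_4x_5$, which involves $x_5$ and so is not a monomial of $S$; the lexsegment property of $I$ says nothing about it, and the transport breaks down. (The conclusion survives because $x_1x_3^2$ is caught by the lower-degree generator $x_1x_3$, but that is a different argument.) A symmetric failure can occur in the forward direction. To close the gap you must either restrict the criterion to monomials $w'$ with $\m(w')+\deg(w')-1\le n$ and show separately that the remaining ones are divisible by lower-degree generators, or reduce the whole statement to the equigenerated case degree by degree; neither step is routine bookkeeping.
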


If $I$ is a squarefree strongly stable monomial ideal, then $\beta_{i,i+j}(I)=\beta_{i,i+j}(I^\tau)$ for all $i$ and $j$ by \cite[2.2]{MR1803232}. Whence, all homological invariants that are expressible by the graded Betti numbers essentially coincide. For instance, we will have $\depth(I)=\depth(I^\tau)$ and $\hdepth_1(I)=\hdepth_1(I^\tau)$. Since powers of the graded maximal ideal and squarefree Veronese ideals are connected by this Gil Kalai correspondence, this gives a quick algebraic solution for the relation established in \cite{arXiv:1106.3922}; see also the discussion below.

This fact leads to the following conjecture:

\begin{conj}
  \label{conj2}
  Let $I\subset S=\KK[x_1,\dots,x_n]$ be a squarefree strongly stable ideal, generated by monomials of degree $d$. Then the following numbers coincide:
  \begin{enumerate}[i]
    \item $\sdepth_n(I)$,
    \item $\hdepth_n(I)$,
    \item $\hdepth_1(I)$,
    \item $\sdepth_n(I^\tau)$,
    \item $\hdepth_n(I^\tau)$,
    \item $\hdepth_1(I^\tau)$,
    \item $\floor{\frac{H(I^\tau,d+1)}{H(I^\tau,d)}}$.
  \end{enumerate}
\end{conj}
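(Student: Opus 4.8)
The plan is to first harvest all the easy (in)equalities, reduce the seven-fold equality to a single combinatorial lower bound, and only then propose a construction to furnish that bound. Taking $J=0$ in the chain $\hdepth_1(M)\ge \sdepth_1(M)\ge \sdepth_n(M)=\hdepth_n(M)$ recorded in Section 2 gives at once $(i)=(ii)$ and $(iv)=(v)$, together with $(iii)\ge(i)$ and $(vi)\ge(iv)$. Because $I$ is squarefree strongly stable, the identity $\beta_{i,i+j}(I)=\beta_{i,i+j}(I^\tau)$ noted after Lemma \ref{lex_sqf_lex} shows that $I$ and $I^\tau$ have the same standard-graded Hilbert series; since $\hdepth_1$ reads off only this series through Uliczka's formula \eqref{Uliczka-formula}, we obtain $(iii)=(vi)$ and also $H(I,d+\ell)=H(I^\tau,d+\ell)$ for every $\ell$, so the floor in $(vii)$ is unambiguous. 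Finally, Lemma \ref{hdepth_nec} applied to the degree-$d$ generated ideal forces $\hdepth_1\le \floor{H(I,d+1)/H(I,d)}=:c$, i.e. $(iii)=(vi)\le(vii)$. At this point every quantity is $\le c$, and it remains only to prove the two reverse inequalities $\sdepth_n(I)\ge c$ and $\sdepth_n(I^\tau)\ge c$; each would squeeze the relevant half of the chain to equality.

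By the Herzog--Vladoiu--Zheng characterization recalled in Section 2, $\sdepth_n(I)\ge c$ amounts to partitioning the divisor poset $P_I$ of squarefree monomials into intervals $[u_i,v_i]$ with $|v_i|\ge c$ for all $i$, and similarly for $P_{I^\tau}$. The construction I would attempt exploits the rigidity of (squarefree) strongly stable ideals generated in a single degree: organize the minimal generators by their $\m$-value, and use the Gotzmann growth relation $H(I,d+1)=H(I,d)^{\MG(n-1)}$, read through the Macaulay representation of $H(I,d)=\mu(I)$, to greedily match each degree-$d$ generator to $c$ distinct ancestors of degree $d+c$, thereby reading off interval tops of cardinality $\ge c$. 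For $I=\frakm^d$ and for the squarefree Veronese ideals this must reproduce the known value $\floor{(n+d)/(d+1)}$ of \cite{arXiv:1002.1400} and \cite{arXiv:1008.4108}, which is the first sanity check I would run.

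A cleaner route, which I would pursue in parallel, is to transport decompositions through the Kalai shift directly: establish $\sdepth_n(I)=\sdepth_n(I^\tau)$ by converting an interval partition of $P_{I^\tau}$ into one of $P_I$ of the same minimal top-cardinality, so that only one of the two constructions is needed. Note also that $(iii)=(vii)$ is, for these ideals, exactly Conjecture \ref{conj1} applied to the stable ideal $I^\tau$; so a proof of $(iii)=(vii)$ via positivity of $(1-T)^c H_{I^\tau}(T)$ would already settle the Hilbert-depth layer and leave only the passage from a Hilbert decomposition to an honest multigraded Stanley decomposition.

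The hard part---and the reason this is stated as a conjecture rather than proved in the excerpt---is precisely this last passage. Producing, uniformly over all squarefree strongly stable $I$, an interval partition whose every top has at least $c$ variables is strictly stronger than the positivity statement $(iii)=(vii)$, which is itself only known for $\frakm^d$ and, via Theorem \ref{hdepth_lex}, for the lexsegment case with $c=1$. I would therefore settle $(iii)=(vii)$ first by Macaulay-representation bookkeeping on $(1-T)^c H_{I^\tau}(T)$, and only then attempt to lift an optimal Hilbert decomposition to a Stanley decomposition through the interval machinery, treating the general strongly stable case as the genuine obstacle.
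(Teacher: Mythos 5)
The statement you are addressing is Conjecture \ref{conj2}: the paper does not prove it, and offers only the reductions you also found plus some computational and special-case evidence. Your preliminary layer is correct and coincides with what the paper records as known: taking $J=0$ in the chain $\hdepth_1\ge\sdepth_1\ge\sdepth_n=\hdepth_n$ gives (i)$=$(ii), (iv)$=$(v), (iii)$\ge$(i) and (vi)$\ge$(iv); the Betti-number identity $\beta_{i,i+j}(I)=\beta_{i,i+j}(I^\tau)$ does force equal standard-graded Hilbert series over the common ambient ring $S$, hence (iii)$=$(vi) via \eqref{Uliczka-formula}; and Lemma \ref{hdepth_nec} gives (iii)$=$(vi)$\le$(vii). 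You also correctly isolate the genuinely open content, namely the lower bounds $\sdepth_n(I)\ge c$ and $\sdepth_n(I^\tau)\ge c$ with $c=\floor{H(I,d+1)/H(I,d)}$, and correctly observe that (vi)$=$(vii) is Conjecture \ref{conj1} for the stable ideal $I^\tau$. So the proposal establishes exactly the portion of the conjecture that the paper already knows, and no more; it is not a proof.

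Two concrete problems with the construction you sketch for the remaining part. First, the growth identity $H(I,d+1)=H(I,d)^{\MG(n-1)}$, i.e.\ \eqref{HLL}, is a property of lexsegment (more generally Gotzmann) ideals, not of arbitrary strongly stable ideals generated in one degree: for $I^\tau=(x_1^2,x_1x_2,x_2^2)\subset\KK[x_1,x_2,x_3]$ one has $H(I^\tau,2)=3$ but $H(I^\tau,3)=7>6=3^{\MG(2)}$. So the ``greedy matching read through the Macaulay representation of $\mu(I)$'' does not have the input it needs outside the lexsegment case. Second, even the Hilbert-depth layer (iii)$=$(vii) is substantially harder than ``Macaulay-representation bookkeeping'': by \eqref{Uliczka-formula} you must verify nonnegativity of \emph{every} coefficient of $(1-T)^{c}H_{I}(T)$, whereas Lemma \ref{hdepth_nec} only controls the coefficient in degree $d+1$; for $I^\tau=\frakm^d$ this verification is the entire content of \cite{arXiv:1002.1400}. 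Since you yourself flag the passage from a Hilbert decomposition to an interval partition as the obstacle, the honest summary is that your write-up is a correct reduction plus a research plan whose two key steps (the growth control and the positivity in all degrees) are not only unexecuted but, as stated, rest on a hypothesis that fails for general strongly stable ideals.
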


This conjecture was first suggested by Herzog, asking the equality of (i) and (iv). In this conjecture, the three equalities $\sdepth_n(I)=\hdepth_n(I)$, $\sdepth_n(I^\tau)=\hdepth_n(I^\tau)$ and $\hdepth_1(I)=\hdepth_1(I^\tau)$ are already known. In addition to Theorem \ref{hdepth_lex} that we will prove later, we have two additional supporting facts for this conjecture.
\begin{enumerate}[a]
  \item Let $I_{n,d}$ be the squarefree Veronese ideal, generated by all the degree $d$ squarefree monomials of $S=\KK[x_1,\dots,x_n]$. Now $1\le d \le n$ and $I_{n,d}$ is squarefree strongly stable with $(I_{n,d})^\tau=(x_1,\dots,x_{n-d+1})^d S$. Let $S'=\KK[x_1,\dots,x_{n-d+1}]$. Then we have the following relation that has been studied in detail in \cite{arXiv:1106.3922}:
    \[
    \hdepth_1(I_{n,d})=\hdepth_1( (I_{n,d})^\tau)=\hdepth_1( (x_1,\dots,x_{n-d+1})^d\cap S')+(d-1),
    \]
    whereas the formula
    \[
    \hdepth_1( (x_1,\dots,x_{n-d+1})^d \cap S')= \ceil{\frac{n-d+1}{d+1}}
    \]
    has been established in \cite[1.2]{arXiv:1002.1400}.

    On the other hand, for the Stanley depth, the conjecture
    \[
    \sdepth_n(I_{n,d})= \ceil{\frac{n-(d-1)}{d+1}}+(d-1)
    \]
    has been partially verified in \cite{arXiv:0907.1232}, \cite{project4} and \cite{arXiv:0911.5458}.
  \item Using direct computer verification, we know that a lexsegment ideal $I\subset \KK[x_1,\dots,x_4]$ of degree $d$ satisfies $\hdepth_1(I)\ge 3$ if and only if $\mu(I)\le 3$. On the other hand, when $d\le 9$, direct computer verification
    shows that a squarefree lexsegment ideal $J\subset \KK[x_1,\dots,x_{d+3}]$ of degree $d$ satisfies $\sdepth_n(J)\ge d+2$ if and only if $\mu(J)\le 3$.
\end{enumerate}

The following example gives a counter-example for Conjecture \ref{conj2} when $I$ is not generated by monomials of the same degree:

\begin{exam}
  Take $I=\braket{x_1x_2,x_1x_3,x_1x_4,x_2x_3x_4,x_2x_3x_5}\subset S=\KK[x_1,\dots,x_5]$. Then $I$ is squarefree strongly stable. The corresponding ideal $I^\tau=\braket{x_1^2,x_1x_2,x_1x_3,x_2^3,x_2^2x_3}S$. Meanwhile, $\sdepth_n(I)=4> \sdepth_n(I^\tau)=3$ by the \texttt{SdepthLib.coc} library \cite{MR2531665} for \texttt{CoCoA} \cite{CocoaSystem}.
\end{exam}

The following example gives a counter-example for Conjecture \ref{conj2} when (squarefree) strongly stable condition is not satisfied:

\begin{exam}
  Let $J=\braket{x_1^2,x_2^2,x_3^2,x_1x_2,x_2x_3}\subset S=\KK[x_1,x_2,x_3]$. Since $x_1x_3\not\in J$, $J$ is not stable. The Hilbert series of $J$ is
  \[
  H_J(T)=\frac{5T^2-5T^3+T^5}{(1-T)^3}.
  \]
  It is not difficult to see that $\hdepth_1(J)=2$. On the other hand, $\sdepth_n(J)=1$ by the \texttt{SdepthLib.coc} library \cite{MR2531665} for \texttt{CoCoA} \cite{CocoaSystem}.
\end{exam}

For the time being, we don't have any counter example for Conjecture \ref{conj2} when $I$ is squarefree stable but not squarefree strongly stable. However, we are not very sure of its validity in this case.

\section{Lexsegment ideals}
If $k$ is a positive integer, write $\xi_k=\sum_{j=1}^k\binom{2j-1}{j}$.

\begin{lem}
  \label{sdepth_sqf_lex}
  Let $I$ be a squarefree lexsegment ideal, generated by monomials of degree $d$. Then $\sdepth_n(I)=d$ if and only if $\mu(I)> \xi_{n-d}$.
\end{lem}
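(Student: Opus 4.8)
The plan is to reduce the statement to a matching problem on the Boolean lattice and then resolve it with the Kruskal--Katona theorem. Since $I$ is squarefree and generated in degree $d$, every squarefree monomial of $I$ has degree $\ge d$, so the all-singletons partition already gives $\sdepth_n(I)\ge d$; hence it suffices to decide when $\sdepth_n(I)\ge d+1$. Using the Herzog--Vladoiu--Zheng poset characterization recalled in Section 2 with $k=d+1$, together with the refinement stated immediately after it (every interval $[u_i,v_i]$ with $\deg(u_i)\le d+1$ may be taken so that $\deg(v_i)=d+1$), I would first show that $\sdepth_n(I)\ge d+1$ holds if and only if the degree-$d$ squarefree monomials of $I$, namely the generators $G(I)$ viewed as a family $W$ of $d$-subsets of $[n]$, admit an injection $u\mapsto ux_{j(u)}$ into the degree-$(d+1)$ squarefree monomials of $I$ (each generator to one of its covers). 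After the refinement each degree-$d$ bottom is matched to a single cover, each surviving degree-$(d+1)$ monomial is a singleton interval, and all monomials of degree $\ge d+2$ may be left as singletons and impose no constraint; so the sole obstruction is this cover-matching. (Equivalently, one may first replace $I$ by $I+Q$, where $Q$ is the ideal of all squarefree monomials of degree $d+1$, via Corollary \ref{rem-k}.)

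Next I would pass to complements. Writing $e=n-d$ and replacing each $d$-subset by its complement turns $W$ into a family $\calF$ of $e$-subsets of $[n]$, turns a cover $u\subset ux_j$ into a containment of an $(e-1)$-subset, and turns the desired cover-matching into a matching sending each member of $\calF$ injectively to one of the $(e-1)$-subsets it contains, i.e.\ into the lower shadow $\partial(\calF)$ (the set of all $(e-1)$-subsets contained in some member of $\calF$). Because $W$ is a lexsegment, the family $\calF$ is, after reversing the order of the variables, an initial segment of the colexicographic order, which by the Kruskal--Katona theorem minimizes the size of the lower shadow among all families of its cardinality; the same holds for every initial sub-segment. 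Hall's marriage theorem then reduces the existence of the matching to the single numerical condition $h(t)\ge t$ for all $t\le\mu(I)$, where $h(t)=|\partial(\calF_t)|$ and $\calF_t$ is the colex-initial family of $t$ $e$-subsets.

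Finally I would evaluate $h$ through the Kruskal--Katona formula. The key numerical fact is that the $e$-th Macaulay representation of $\xi_{e}=\sum_{j=1}^{e}\binom{2j-1}{j}$ is exactly $\binom{2e-1}{e}+\binom{2e-3}{e-1}+\cdots+\binom{1}{1}$; the exponents $a_j=2j-1$ are admissible because $a_e>a_{e-1}>\cdots>a_1$ and $a_j\ge j$. The Kruskal--Katona bound then gives $h(\xi_e)=\sum_{j=1}^{e}\binom{2j-1}{j-1}=\sum_{j=1}^{e}\binom{2j-1}{j}=\xi_e$, using $\binom{2j-1}{j-1}=\binom{2j-1}{j}$, so Hall's condition is tight at the top. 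A routine check that $h(t)\ge t$ persists for all $t\le\xi_e$, together with the expansion $\xi_e+1=\binom{2e-1}{e}+\cdots+\binom{3}{2}+\binom{2}{1}$ giving $h(\xi_e+1)=\xi_e<\xi_e+1$, shows that the cover-matching exists precisely when $\mu(I)\le\xi_{n-d}$. Combined with the first paragraph and with $\sdepth_n(I)\ge d$, this yields $\sdepth_n(I)=d$ if and only if $\mu(I)>\xi_{n-d}$.

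I expect the main obstacle to be this last step: making the Kruskal--Katona bookkeeping rigorous for all $t\le\xi_e$ rather than only at the threshold, and confirming that $\mu(I)>\xi_{n-d}$ produces a genuine Hall deficiency (so that no cover-matching survives) instead of merely failing at one value of $t$. A secondary delicate point is the reduction in the first paragraph: one must justify, through the cited partition refinement, that monomials of degree $\ge d+2$ never obstruct a depth-$(d+1)$ partition, so that the problem collapses cleanly to the single cover-matching between degrees $d$ and $d+1$.
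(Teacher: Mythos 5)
Your proposal is correct, but it takes a genuinely different route from the paper. The paper's own proof is three lines of citations: the implication $\mu(I)\le \xi_{n-d}\Rightarrow \sdepth_n(I)\ge d+1$ is quoted from [1.1] of the author's earlier paper \emph{When will the Stanley depth increase} (arXiv:1110.3182), the borderline case $\mu(I)=\xi_{n-d}+1$ from [4.1] there, and the remaining cases $\mu(I)>\xi_{n-d}+1$ from a monotonicity statement [2.3] there. You instead reconstruct a self-contained argument: reduce $\sdepth_n(I)\ge d+1$ to a cover-matching from the degree-$d$ generators into the degree-$(d+1)$ squarefree monomials of $I$ (via the Herzog--Vladoiu--Zheng interval partitions), pass to complements so the matching becomes one into the lower shadow, and settle existence by Hall plus Kruskal--Katona. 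This is sound: the ``routine check'' you flag as the main obstacle, namely $\partial_{e-1}(t)\ge t$ for all $t\le\xi_e$, is precisely the implication (a)$\Rightarrow$(b) of the paper's Proposition \ref{inv1}, so it is already available; and the Hall deficiency for $\mu(I)>\xi_{n-d}$ is genuine, since the colex-initial subfamily of size $\xi_e+1$ sits inside your complement family and has shadow exactly $\partial_{e-1}(\xi_e+1)=\xi_e$, violating Hall's condition for that one subfamily (which suffices --- no single value of $t$ needs to be ``repaired''). What your approach buys is transparency and self-containment, and it makes visible that the positive direction ($\mu(I)\le\xi_{n-d}\Rightarrow\sdepth_n(I)\ge d+1$) needs no lexsegment hypothesis at all, only Kruskal--Katona applied to arbitrary subfamilies; the lexsegment (compressed) hypothesis is used only to certify that the shadow bound is attained, hence that the matching fails. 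What the paper's citation buys is brevity and the stronger intermediate statements of the earlier paper, whose proofs are in the same combinatorial spirit as yours.
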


\begin{proof}
  When $\mu(I)\le \xi_{n-d}$, $\sdepth_n(I)\ge d+1$ by \cite[1.1]{project7}. When $\mu(I)=\xi_{n-d}+1$, $\sdepth_n(I)=d$ by \cite[4.1]{project7}. Thus, when $\mu(I)>\xi_{n-d}$, $\sdepth_n(I)=d$ by \cite[2.3]{project7}.
\end{proof}

The following result is a generalization of \cite[3.6]{project7}.

\begin{prop}
  \label{inv1}
  The following conditions are equivalent for positive integers $x$ and $k$:
  \begin{enumerate}[a]
    \item $x\le \xi_k$;
    \item $\partial_{k-1}(x)\ge x$;
    \item $x^{\MG(k)}\ge 2x$.
  \end{enumerate}
\end{prop}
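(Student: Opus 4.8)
The plan is to dispatch the equivalence (b)$\Leftrightarrow$(c) by Pascal's rule and then to analyse the integer-valued function $x\mapsto \partial_{k-1}(x)-x$ through the Macaulay representation. Writing the $k$th Macaulay representation $x=\binom{\lambda_k}{k}+\cdots+\binom{\lambda_s}{s}$ with $\lambda_k>\cdots>\lambda_s\ge s\ge 1$, the identity $\binom{\lambda+1}{j}=\binom{\lambda}{j}+\binom{\lambda}{j-1}$ gives $x^{\MG(k)}=\sum_j\binom{\lambda_j}{j}+\sum_j\binom{\lambda_j}{j-1}=x+\partial_{k-1}(x)$. Hence $x^{\MG(k)}\ge 2x$ is literally the same inequality as $\partial_{k-1}(x)\ge x$, which settles (b)$\Leftrightarrow$(c). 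The same identity shows $\partial_{k-1}(x)-x=\sum_j\big(\binom{\lambda_j}{j-1}-\binom{\lambda_j}{j}\big)$, and each summand is $\ge 0$ iff $\lambda_j\le 2j-1$ and $=0$ iff $\lambda_j=2j-1$; this termwise dichotomy is the engine for the rest.

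For (a)$\Leftrightarrow$(b) I would first record the role of $\xi_k$. Its $k$th Macaulay representation is exactly $\lambda_j=2j-1$ for $1\le j\le k$ (the indices $2k-1>\cdots>1$ strictly decrease), so every summand above vanishes and $\partial_{k-1}(\xi_k)=\xi_k$, equivalently $\xi_k^{\MG(k)}=2\xi_k$. Thus $\xi_k$ is the distinguished value at which (b) and (c) hold with equality, and it remains to prove that the set $\{x:\partial_{k-1}(x)\ge x\}$ equals the initial segment $[1,\xi_k]$.

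I would prove this by induction on $k$ (the base $k=1$ being $\partial_0(x)-x=1-x$), peeling off the top Macaulay term: writing $x=\binom{a}{k}+x'$ with $x'$ a $(k-1)$-Macaulay number of top index $\le a-1$, we get $\partial_{k-1}(x)-x=\big(\binom{a}{k-1}-\binom{a}{k}\big)+\big(\partial_{k-2}(x')-x'\big)$. When $a=2k-1$ the first bracket vanishes and, since $\xi_k=\binom{2k-1}{k}+\xi_{k-1}$ and the lexicographic comparison of Macaulay vectors defers to the tail, the claim reduces to the inductive hypothesis for $x'$ against $\xi_{k-1}$. When $a\le 2k-2$ one has $x<\xi_k$ and the first bracket is positive; when $a\ge 2k$ one has $x>\xi_k$ and it is $\le -C_k$, where $C_k=\binom{2k}{k}-\binom{2k}{k-1}$ is the Catalan number. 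In each of these two cases I would close the argument by an extremal estimate for $\partial_{k-2}(x')-x'$ over all $(k-1)$-numbers $x'$ with top index $\le a-1$: for $a\ge 2k$ by the (global) bound $\max_{x'}(\partial_{k-2}(x')-x')=C_1+\cdots+C_{k-2}<C_k$ (using $\sum_{i=0}^{m}C_i\le C_{m+1}$), the maximizer lying inside the window since $a-1\ge 2k-1$; and for $a\le 2k-2$ by the complementary lower bound, which at the largest admissible $x'$, namely $\binom{a}{k-1}-1$, amounts to the elementary inequality $\binom{a}{k}-\binom{a}{k-2}\le 1$, valid for $a\le 2k-2$.

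The hard part is exactly these window estimates. The function $x\mapsto\partial_{k-1}(x)-x$ is genuinely non-monotone: below $\xi_k$ it oscillates and repeatedly returns to $0$, and above $\xi_k$ it can jump upward again (e.g.\ across pure powers $\binom{c}{k}$), so no slope/monotonicity shortcut is available and one truly needs the ``initial segment'' statement rather than a pointwise comparison. Moreover the location of the extremum of $\partial_{k-2}(x')-x'$ over a window migrates from the largest admissible $x'$ to the smallest one once the top-index bound is close to $k-1$, and these small-top-index exceptions must be handled separately (there the extremum sits at the pure power $\binom{k-1}{k-1}$, where the relevant quantity is harmlessly nonpositive). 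Keeping the Catalan bookkeeping sharp through these exceptions is the only delicate point; everything else is Pascal's rule together with the fixed-point identity $\partial_{k-1}(\xi_k)=\xi_k$.
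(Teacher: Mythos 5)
Your handling of (b)$\Leftrightarrow$(c) via Pascal's rule is exactly the paper's argument, and your treatment of the implication $x>\xi_k\Rightarrow\partial_{k-1}(x)<x$ is essentially identical to the paper's proof of (b)$\Rightarrow$(a): the same induction on $k$, the same case split according to whether the leading Macaulay index equals or exceeds $2k-1$, and the same Catalan bookkeeping (leading bracket at most $-\bigl(\binom{2k}{k}-\binom{2k}{k-1}\bigr)=-C_k$ against a tail bounded by $C_1+\cdots+C_{k-2}$, closed by the recurrence $C_{m+1}=\sum_{i=0}^m C_iC_{m-i}$). The genuine divergence is in the converse implication $x\le\xi_k\Rightarrow\partial_{k-1}(x)\ge x$: the paper does not prove this at all but cites \cite[3.6]{project7}, whereas you propose a self-contained extremal argument over the window of admissible tails $x'$. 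That is a reasonable and more self-contained route, but it is also where your sketch is not yet a proof.

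Concretely, the unproved step is your claim that for $a\le 2k-2$ the maximum of $x'-\partial_{k-2}(x')$ over the window $\{x'\colon \lambda_{k-1}\le a-1\}$ sits at $x'=\binom{a}{k-1}-1$. As you yourself note, the extremum migrates; but your proposed repair (only the case where it lands on the pure power $\binom{k-1}{k-1}$) does not obviously exhaust the intermediate windows $k-1<a-1<2k-3$, where each summand $\binom{\lambda_j}{j}-\binom{\lambda_j}{j-1}$ is non-monotone in $\lambda_j$ on $[j,2j-3]$ and the optimizing chain need not be either endpoint. A way to close this within your framework: a summand at position $l$ is bounded above by $\max\bigl(1-l,\ \binom{m+l}{l}-\binom{m+l}{l-1}\bigr)$ with $m:=a-k$, which is nonpositive for $l>m$ and equals $\binom{m+l}{l}-\binom{m+l}{l-1}$ for $l\le m$; hence the supremum over the window is at most $\max\bigl(0,\ \sum_{l=1}^{m}\bigl[\binom{m+l}{l}-\binom{m+l}{l-1}\bigr]\bigr)=\max(0,\,C_{m+1}-1)$, and one checks $\binom{m+k}{k-1}-\binom{m+k}{k}\ge C_{m+1}-1$ for all $0\le m\le k-2$ (with equality of the dominant terms exactly at $m=k-2$). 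With that lemma your induction goes through; as written, the step is a gap, though it affects only the half of the equivalence that the paper outsources to a citation.
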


\begin{proof}
  Suppose $x=\binom{a_k}{k}+\cdots + \binom{a_i}{i}$ is the $k$th Macaulay representation of $x$.

  For the equivalence of $(b)$ and $(c)$, we just need to notice that $x^{\MG(k)}-x=\sum_{j=i}^k \binom{a_j+1}{j} -\sum_{j=i}^k \binom{a_j}{j} = \sum_{j=i}^k (\binom{a_j+1}{j}-\binom{a_j}{j})=\sum_{j=i}^k \binom{a_j}{j-1}=\partial_{k-1}(x)$.

  For the equivalence of $(a)$ and $(b)$, we recall that the direction $(a)\implies (b)$ is actually \cite[3.6]{project7}. The proof of $(b)\implies (a)$ shall be carried out similarly as follows.

  We induct on the positive integer $k$, with the case $k=1$ being trivial.

  Now, suppose $k>1$ and $x>\xi_k$. If $i>1$, for $1\le j < i$, we write $a_j=j-1$. Then $(a_k,a_{k-1},\dots,a_1)>_{\lex}(2k-1,2k-3,\dots,3,1)$ by \cite[4.2.7]{MR1251956}. In particular, $a_k\ge 2k-1$.

  If $a_k=2k-1$, we have $x'=\sum_{j=i}^{k-1}\binom{a_j}{j}> \xi_{k-1}$. Thus, by induction hypothesis, we have $\partial_{k-2}(x') < x'$ and equivalently $\partial_{k-1}(x) < x$.

  If $a_k>2k-1$, the inequality $\partial_{k-1}(x) < x$ is equivalent to
  \begin{equation}
    \binom{a_k}{k}-\binom{a_k}{k-1} > \sum_{j=i}^{k-1}\left[\binom{a_j}{j-1}-\binom{a_j}{j}\right].
    \label{11}
  \end{equation}
  In the left hand side of \eqref{11}, the difference $\binom{a_k}{k}-\binom{a_k}{k-1}$ is an increasing function for integer $a_k > 2k-1$ (see the proof of \cite[3.6]{project7}), thus has the minimum $\binom{2k}{k}-\binom{2k}{k-1}$. In the right hand side of \eqref{11}, we have $a_j\ge j$ and the difference $\binom{a_j}{j-1}-\binom{a_j}{j}$ is positive only for $a_j < 2j-1$. When $j\le a_k < 2j-1$, this difference is an increasing function for the integer $a_j$. Thus the right side of \eqref{11} has the maximum $\sum_{j=i}^{k-1}\left[ \binom{2j-2}{j-1}-\binom{2j-2}{j} \right]$. Now, it suffices to show that
  \begin{equation}
    \binom{2k}{k}-\binom{2k}{k-1} >
    \sum_{j=i}^{k-1}\left[ \binom{2j-2}{j-1}-\binom{2j-2}{j} \right].
    \label{22}
  \end{equation}
  Notice that previously we assume that $j\le a_j <2j-1$, thus $j>1$ and we  only need to consider the case when $i>1$ in the above inequality. Now \eqref{22} follows directly from the well-known recurrence relation of Catalan numbers. Recall that the $n$th Catalan number $C_n=\binom{2n}{n}-\binom{2n}{n+1}$, and it satisfies
  \[
  C_{n+1}=\sum_{j=0}^n C_i C_{n-j} \qquad \text{for $n\ge 0$},
  \]
  with $C_0=1$; see for instance \cite[14.7]{MR1871828}.
\end{proof}

\begin{thm}
  \label{hdepth_lex}
  Let $I\subset S$ be a nonzero lexsegment ideal, generated by monomials of degree $d$.
  \begin{enumerate}[a]
    \item In the standard graded case, the Hilbert depth $\hdepth_1(I)=1$ if and only if $\mu(I)>\xi_{n-1}$.
    \item In the multigraded case, the Hilbert depth $\hdepth_n(I)=1$ when $\mu(I) > \xi_{n-1}$.
  \end{enumerate}
\end{thm}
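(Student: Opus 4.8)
The plan is to route everything through Uliczka's formula \eqref{Uliczka-formula}. Since $I$ is a nonzero monomial ideal, the inequalities recorded in Section 2 give $\hdepth_1(I)\ge \sdepth_n(I)\ge 1$, so part (a) is equivalent to deciding whether $\hdepth_1(I)\ge 2$, which by \eqref{Uliczka-formula} means precisely that $(1-T)^2H_I(T)$ is positive. The first step is therefore to pin down the Hilbert function of $I$. Writing the $(n-1)$th Macaulay representation of $\mu(I)=H(I,d)$ as $\mu(I)=\sum_{l}\binom{a_l}{l}$ and iterating \eqref{HLL} (legitimate, since each $I_{d+j}$ is again lexsegment), I would observe that the operation $\MG(n-1)$ simply raises every upper index by $1$, so that
\[
H(I,d+j)=\sum_{l}\binom{a_l+j}{l}\qquad\text{for all }j\ge 0.
\]

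Next I would expand $(1-T)^2H_I(T)=\sum_{j\ge 0}c_jT^{d+j}$ and compute the coefficients $c_j=H(I,d+j)-2H(I,d+j-1)+H(I,d+j-2)$ term by term. For $j\ge 2$ all three arguments are genuine values of the closed form, and a double application of Pascal's rule collapses each summand to $\binom{a_l+j-2}{l-2}\ge 0$, whence $c_j\ge 0$ automatically. The boundary coefficients are $c_0=\mu(I)>0$ and, using $\binom{a_l+1}{l}=\binom{a_l}{l}+\binom{a_l}{l-1}$,
\[
c_1=\sum_{l}\Bigl(\binom{a_l}{l-1}-\binom{a_l}{l}\Bigr)=\partial_{n-2}(\mu(I))-\mu(I).
\]
Thus the entire positivity question reduces to the single inequality $c_1\ge 0$.

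Finally I would invoke Proposition \ref{inv1}: the equivalence of (a) and (b) there, applied with $k=n-1$ and $x=\mu(I)$, says exactly that $\partial_{n-2}(\mu(I))\ge \mu(I)$ if and only if $\mu(I)\le\xi_{n-1}$. Hence $(1-T)^2H_I(T)$ is positive if and only if $\mu(I)\le\xi_{n-1}$, giving $\hdepth_1(I)\ge 2\iff \mu(I)\le\xi_{n-1}$, that is, $\hdepth_1(I)=1\iff \mu(I)>\xi_{n-1}$; this is part (a). (The easy half $\mu(I)>\xi_{n-1}\Rightarrow\hdepth_1(I)=1$ can alternatively be read off directly from Lemma \ref{hdepth_nec} together with Proposition \ref{inv1}(c).) For part (b), the chain $\hdepth_n(I)\le\hdepth_1(I)$ from Section 2, combined with $\hdepth_n(I)=\sdepth_n(I)\ge 1$, forces $\hdepth_n(I)=1$ as soon as $\hdepth_1(I)=1$, i.e. whenever $\mu(I)>\xi_{n-1}$.

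The main obstacle, and the one point requiring genuine care, is the bookkeeping at the two boundary coefficients $c_0,c_1$: the clean closed form $c_j=\sum_l\binom{a_l+j-2}{l-2}$ is valid only for $j\ge 2$, since for $j\in\{0,1\}$ the values $H(I,d-1),H(I,d-2)$ that the formula would predict are overridden by the true value $0$. The conceptual content of the theorem is precisely that, after this correction, all second differences of the Hilbert function are nonnegative except possibly the one in degree $d+1$, so that a single threshold — governed by $\xi_{n-1}$ through Proposition \ref{inv1} — decides whether $\hdepth_1(I)$ collapses to $1$.
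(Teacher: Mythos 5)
Your proposal is correct, but it takes a genuinely different route from the paper's for the substantive direction $\mu(I)\le\xi_{n-1}\Rightarrow\hdepth_1(I)\ge 2$. The paper passes through the Kalai shift: it first disposes of the case $\mu(I)<n$ via Lemma \ref{depth1}, then forms the squarefree lexsegment ideal $I^\sigma$ (Lemma \ref{lex_sqf_lex}), quotes $\sdepth_n(I^\sigma)\ge d+1$ from Lemma \ref{sdepth_sqf_lex} (i.e.\ from \cite{project7}), and transfers back via $\hdepth_1(I)=\hdepth_1(I^\sigma)-(d-1)\ge\sdepth_n(I^\sigma)-(d-1)\ge 2$. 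You instead stay entirely on the Hilbert-series side: iterating \eqref{HLL} gives the closed form $H(I,d+j)=\sum_l\binom{a_l+j}{l}$ (legitimate, as you note, because the expression obtained by applying $\MG(n-1)$ is again a valid Macaulay representation), the second differences of the Hilbert function collapse to nonnegative binomials in every degree except $d+1$, where the coefficient is $\partial_{n-2}(\mu(I))-\mu(I)$, and Proposition \ref{inv1} converts the sign of that single coefficient into the threshold $\xi_{n-1}$; Uliczka's formula \eqref{Uliczka-formula} then finishes. (The step from the max in \eqref{Uliczka-formula} to the equivalence $\hdepth_1(I)\ge 2\iff(1-T)^2H_I(T)$ positive uses that positivity is inherited by smaller exponents, which holds since $(1-T)^{-1}$ has nonnegative coefficients.) The easy direction and part (b) agree with the paper's. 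Your argument buys self-containment: it needs only \eqref{Uliczka-formula}, \eqref{HLL} and Proposition \ref{inv1}, avoiding the external Stanley-depth input from \cite{project7}. The paper's argument, at the cost of that input, produces the stronger multigraded statement $\sdepth_n(I^\sigma)\ge d+1$, which is exactly the bridge to Conjecture \ref{conj2} and explains why the same threshold $\xi_{n-1}$ governs both the squarefree Stanley depth and the standard graded Hilbert depth.
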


\begin{proof}
  \begin{enumerate}[a]
    \item In the standard graded case, we consider the following two sub-cases.
      \begin{enumerate}[i]
        \item If $\mu(I) > \xi_{n-1}$, Proposition \ref{inv1} says that $\mu(I)^{\MG(n-1)} < 2\cdot \mu(I)$, i.e., $H(I,d+1) < 2\cdot H(I,d)$. Thus, by Lemma \ref{hdepth_nec}, $\hdepth_1(I)\le 1$. Meanwhile, $\hdepth_1(I)\ge \depth(I)\ge 1$. Thus $\hdepth_1(I)=1$.
        \item If $\mu(I)\le \xi_{n-1}$, we may assume that $\mu(I)\ge n$, since otherwise $\m(I)=\mu(I)\le n-1$ and $\hdepth_1(I)\ge \depth(I)$ which is at least $2$ by Lemma \ref{depth1}. Now, with $\mu(I)\ge n$, the squarefree $I^\sigma \subseteq S'=\KK[x_1,\dots,x_{n+d-1}]$ is a squarefree lexsegment ideal by Lemma \ref{lex_sqf_lex}. Since $\mu(I^\sigma)=\mu(I)\le \xi_{n-1}$, $\sdepth_n(I^\sigma)\ge d+1$ by Lemma \ref{sdepth_sqf_lex}. Notice that $(I^\sigma)^\tau=IS''$. Thus $\hdepth_1(I) = \hdepth_1(I^\sigma) -(d-1) \ge \hdepth_n(I^\sigma)-(d-1)=\sdepth_n(I^\sigma)-(d-1)\ge 2$.
      \end{enumerate}
    \item In the multigraded graded case, when $\mu(I) > \xi_{n-1}$, one has $1\le \depth(I)\le \hdepth_n(I) \le \hdepth_1(I)\le 1$ by part (a). Thus, $\hdepth_n(I)=1$.
  \end{enumerate}
\end{proof}

\begin{rem}
  In Conjecture \ref{conj2}, we believe that the converse of Theorem \ref{hdepth_lex} (b) also holds, i.e., if $\mu(I)\le \xi_{n-1}$, then $\sdepth_n(I)\ge 2$. Except for computational evidence, we cannot establish this as a fact so far. However, we have the inequality
  \[
  \sdepth_n(I)\ge n-\floor{\frac{\mu(I)}{2}},
  \]
  which was established by Keller and Young \cite{KeYo2009} for squarefree monomial ideals, and by Okazaki \cite{Okazaki2009} for general monomial ideals. This implies that when $\mu(I)\le 2n-3$, $\sdepth_n(I)\ge 2$.
\end{rem}

\begin{conj}
  When $I$ is a lexsegment ideal of $S=\KK[x_1,\dots,x_n]$, generated by monomials of degree $d$, the Hilbert depth $\hdepth_1(I)$ is a decreasing function on $\mu(I)$.
\end{conj}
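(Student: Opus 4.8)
The plan is to use Corollary \ref{lex_ideals}, which shows that $\hdepth_1(I)$ depends only on $\mu(I)$ and not on the generating degree $d$; write $h(\mu)$ for this common value, so the claim becomes that $h$ is a weakly decreasing function of $\mu$. Since $h$ is integer-valued it suffices to prove $h(\mu+1)\le h(\mu)$ for every $\mu\ge 1$. After replacing the two relevant ideals by lexsegment ideals generated in one common degree $d$ (Corollary \ref{lex_ideals}(a)), we may take $I_\mu\subseteq I_{\mu+1}$ with $G(I_{\mu+1})=G(I_\mu)\cup\Set{w}$, where $w$ is the next monomial in the lexicographic order. Setting $a_0=\mu$, $a_{j+1}=a_j^{\MG(n-1)}$ and $b_0=\mu+1$, $b_{j+1}=b_j^{\MG(n-1)}$, equation \eqref{HLL} gives $H(I_\mu,d+j)=a_j$ and $H(I_{\mu+1},d+j)=b_j$, while the monotonicity of the Macaulay growth operation $x\mapsto x^{\MG(n-1)}$ yields $a_j\le b_j$ for all $j$.

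My first line of attack is conditional on Conjecture \ref{conj1}. If that conjecture holds for lexsegment ideals, then $h(\mu)=\floor{\mu^{\MG(n-1)}/\mu}$, and because $r$ is an integer,
\[
h(\mu)\ge r \iff \mu^{\MG(n-1)}\ge r\,\mu \iff \partial_{n-2}(\mu)\ge (r-1)\,\mu,
\]
the last equivalence using the identity $x^{\MG(n-1)}-x=\partial_{n-2}(x)$ recorded in the proof of Proposition \ref{inv1}. Hence $h$ is weakly decreasing exactly when, for every integer $r\ge 2$, the set $\Set{x\in\ZZ_{>0}:\partial_{n-2}(x)\ge (r-1)x}$ is an initial segment of $\ZZ_{>0}$. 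The case $r=2$ is precisely Proposition \ref{inv1}, so the route is to generalize that proposition to all $r$: expand the $(n-1)$th Macaulay representation of $x$, reduce the inequality to a comparison of the leading binomial difference against the sum of the lower ones in the style of \eqref{11}--\eqref{22}, and close the estimate with the recurrence for the appropriate generalized (Fuss--Catalan-type) numbers in place of the Catalan recurrence. Small cases can be checked by hand and support this line, though they do not yet pin down a closed form for the threshold.

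A second, unconditional, line works directly from Uliczka's formula \eqref{Uliczka-formula}, comparing the coefficient sequences $c_k=\sum_{\ell}(-1)^\ell\binom{u}{\ell}a_{k-\ell}$ and $c_k'=\sum_{\ell}(-1)^\ell\binom{u}{\ell}b_{k-\ell}$ of $(1-T)^uH_{I_\mu}(T)$ and $(1-T)^uH_{I_{\mu+1}}(T)$. Here one fixes $u=h(\mu+1)$, so that all $c_k'\ge 0$, and must deduce that all $c_k\ge 0$. Writing $\delta_j=b_j-a_j$ (so $\delta_0=1$ and $\delta_{j+1}=b_j^{\MG(n-1)}-a_j^{\MG(n-1)}$), this amounts to the inequalities $\sum_{\ell}(-1)^\ell\binom{u}{\ell}\delta_{k-\ell}\le c_k'$ for all $k$. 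The increments $\delta_j$ are exactly the Hilbert function of $I_{\mu+1}/I_\mu\cong (S/(I_\mu:w))(-d)$, and the hope is to control them through this cyclic quotient together with the Macaulay recursion.

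I expect the main obstacle, common to both lines, to be the failure of monotonicity of positivity: although $a_j\le b_j$ coefficientwise, the operator $(1-T)^u$ carries negative coefficients, so the pointwise domination $H_{I_\mu}\le H_{I_{\mu+1}}$ says nothing directly about the alternating sums, and indeed the bare ratio $\mu^{\MG(n-1)}/\mu$ is \emph{not} monotone in $\mu$ (it can increase across a jump between consecutive Macaulay representations). Monotonicity of $h$ is therefore genuinely a statement about the integer part of that ratio, equivalently about the first index at which an alternating sum turns negative. In the conditional line this is packaged cleanly as the initial-segment property for each threshold $r$, and proving that generalization of Proposition \ref{inv1}, via a Fuss--Catalan recurrence, is in my view the crux; I would attempt it first and only afterwards try to transport the resulting combinatorial estimate into the direct Uliczka framework so as to remove the dependence on Conjecture \ref{conj1}.
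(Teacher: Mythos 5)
This statement is labelled as a conjecture in the paper, and the paper offers no proof of it; so there is no ``paper's own proof'' to compare against, and the only question is whether your proposal actually closes the problem. It does not. Your reduction via Corollary \ref{lex_ideals} to a function $h(\mu)$ of $\mu$ alone is correct, and the translation of monotonicity into the initial-segment property of the superlevel sets $\Set{x\in\ZZ_{>0}:\partial_{n-2}(x)\ge(r-1)x}$ is clean and accurate \emph{granting} Conjecture \ref{conj1}. But that is exactly where both of your lines stop short. The first line is doubly conditional: it assumes Conjecture \ref{conj1}, which is itself open in the paper (verified there only for powers of the maximal ideal and for the $\hdepth_1=1$ case), and even then the needed generalization of Proposition \ref{inv1} from the threshold $r=2$ to arbitrary $r$ is only sketched --- the proposed Fuss--Catalan-type recurrence is a guess, not an argument, and the analogue of the extremal estimate \eqref{11}--\eqref{22} for general $r$ is precisely the hard combinatorial content that would have to be supplied. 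Nothing in the paper's proof of Proposition \ref{inv1} transfers automatically, because that proof leans on the specific Catalan identity $C_{n+1}=\sum_j C_jC_{n-j}$, which matches the coefficient $1$ in $\partial_{k-1}(x)\ge x$ and has no evident replacement when the right-hand side becomes $(r-1)x$.

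The second, ``unconditional'' line does not advance the problem at all: since $c_k'=c_k+\sum_\ell(-1)^\ell\binom{u}{\ell}\delta_{k-\ell}$, the inequality $\sum_\ell(-1)^\ell\binom{u}{\ell}\delta_{k-\ell}\le c_k'$ is literally equivalent to the goal $c_k\ge 0$, so identifying the increments $\delta_j$ with the Hilbert function of $(S/(I_\mu:w))(-d)$ is a reformulation, not a reduction. You correctly diagnose the central obstacle --- positivity of $(1-T)^uH(T)$ is not preserved under coefficientwise domination of $H$, and the raw ratio $\mu^{\MG(n-1)}/\mu$ genuinely fails to be monotone in $\mu$ --- but neither line overcomes it. In short: the statement remains a conjecture, your proposal is a reasonable research plan consistent with the evidence the paper cites, and the crux you would need to prove (the initial-segment property for every threshold $r$, or some unconditional surrogate for it) is left open in your write-up just as it is in the paper.
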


\section{Squarefree strongly stable ideals}

\begin{rem}
  \label{sd_sqf_st_st}
  Let $I\subseteq S=\KK[x_1,\dots,x_n]$ be a squarefree strongly stable monomial ideal (not necessarily generated by monomials of the same degree) and $\Delta$ the associated Stanley-Reisner simplicial complex. Then
  $\Delta$ is shifted, i.e., for each $F\in \Delta$, $i\in F$ and $j\in [n]$ with $j>i$, one has $(F\setminus \Set{i})\cup \Set{j}\in \Delta$. Thus
  $\Delta$ is nonpure shellable by \cite[11.3]{MR1401765}. Now the Stanley-Reisner ring $\KK[\Delta]=S/I$ is a clean ring by \cite{MR1239277}, and $\sdepth_n(R/I)=\depth(R/I)$ by \cite[1.3]{arxiv.0712.2308}. In particular, Stanley conjecture holds for $S/I$.
\end{rem}

If $I\subset S$ is a monomial ideal, Rauf \cite{arXiv:0812.2080} asked if
\begin{equation}
  \label{rauf-eqn}
  \sdepth_n(I)\ge 1+\sdepth_n(S/I).
\end{equation}
It holds for monomial complete intersections \cite[2.7]{arXiv:0812.2080} and intersections of two irreducible monomial ideals \cite[5.4]{MR2609185}.
In addition, if $1\le r \le e \le q$ are some integers such that $n=r+e+q$, and $\frakp_1=(x_1,\dots,x_r),\frakp_2=(x_{r+1},\dots,x_{r+e}),\frakp_3=(x_{r+e+1},\dots,x_{r+e+q})$ are disjoint prime ideals. Take $I=\frakp_1\cap \frakp_2\cap \frakp_3$. Then inequality \eqref{rauf-eqn} holds except possible in the case when either $r=e$ is even and $q$ is even, or $r$ is odd and $e=r+1$, see \cite[20]{MR2794292}.

Obviously, when $\sdepth_n(R/I)=\depth(R/I)$ (e.g, $I$ is squarefree strongly stable), inequality \eqref{rauf-eqn} is equivalent to the Stanley conjecture \eqref{Stanley_conj} for $I$:
  \[
  \sdepth_n(I)\ge \depth(I).
  \]

\begin{thm}
  \label{sqf_st}
  The Stanley conjecture holds for squarefree stable ideals. In particular, Rauf's inequality \eqref{rauf-eqn} holds for squarefree strongly stable ideals.
\end{thm}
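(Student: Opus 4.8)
The plan is to deduce everything from the first assertion, the Stanley conjecture $\sdepth_n(I)\ge\depth(I)$ for a squarefree stable ideal $I$. Granting this, the ``in particular'' clause is immediate: a squarefree strongly stable ideal is in particular squarefree stable, so $\sdepth_n(I)\ge\depth(I)$; moreover Remark \ref{sd_sqf_st_st} gives $\sdepth_n(S/I)=\depth(S/I)$, and, as observed just before the theorem, under this equality Rauf's inequality \eqref{rauf-eqn} is equivalent to the Stanley conjecture \eqref{Stanley_conj} for $I$. Hence Rauf's inequality follows. So the whole content lies in the first statement, which I would attack through the Herzog--Vladoiu--Zheng partition criterion for squarefree monomial ideals recalled in Section 2, together with the depth formula of Lemma \ref{depth1}(b).

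Set $t=\depth(I)=\min\Set{n-\m(u)+\deg(u):u\in G(I)}$. I would exhibit an explicit interval partition of the poset $P_I$ of squarefree monomials of $I$ in which every interval top has cardinality at least $t$; by the partition criterion this yields $\sdepth_n(I)\ge t$. The candidate is the squarefree Eliahou--Kervaire-type decomposition
\[
P_I=\coprod_{u\in G(I)}\bigl[\,u,\;u\,x_{\m(u)+1}\cdots x_n\,\bigr],
\]
where the interval attached to a generator $u$ consists of all $u\cdot n$ with $n$ squarefree and $\supp(n)\subseteq\Set{x_{\m(u)+1},\dots,x_n}$. Its top $v_u=u\,x_{\m(u)+1}\cdots x_n$ satisfies $|v_u|=\deg(u)+(n-\m(u))=n-\m(u)+\deg(u)$, so the smallest top cardinality over all generators is exactly $t$ by Lemma \ref{depth1}(b). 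In particular every interval top has size $\ge t$, which is precisely what the partition criterion requires.

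The real work, and the step I expect to be the main obstacle, is verifying that this family is genuinely a partition of $P_I$. For the covering, given a squarefree $w\in I$ I would strip off its largest variable repeatedly, as long as the result stays in $I$, stopping at a divisor $u\mid w$ with $u\in I$ but $u/x_{\m(u)}\notin I$; by construction $\supp(w/u)\subseteq\Set{x_{\m(u)+1},\dots,x_n}$, so $w$ lies in the interval of $u$ \emph{provided} $u\in G(I)$. Proving $u\in G(I)$ is the crux, and this is exactly where squarefree stability is used: if $u$ were not minimal, pick $g\in G(I)$ with $g\mid u$ and $g\ne u$; when $x_{\m(u)}\nmid g$ we get $g\mid u/x_{\m(u)}$ directly, while when $x_{\m(u)}\mid g$ we have $\m(g)=\m(u)$ and some $x_i\mid u$ with $x_i\nmid g$ and $i<\m(u)$, so squarefree stability gives $x_i\,(g/x_{\m(u)})\in I$; since $u$ is squarefree and $g\,x_i\mid u$, this forces $u/x_{\m(u)}\in I$. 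Either way we contradict the stopping condition, so $u\in G(I)$.

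For disjointness, suppose $w=u_1n_1=u_2n_2$ lies in two of the intervals, with $\m(u_1)\le\m(u_2)$. Since $\supp(n_i)\subseteq\Set{x\in\supp(w):x>\m(u_i)}$, the support of $u_i$ is exactly $\Set{x\in\supp(w):x\le\m(u_i)}$, whence $\supp(u_1)\subseteq\supp(u_2)$, i.e. $u_1\mid u_2$; minimality of both generators forces $u_1=u_2$, and then $n_1=n_2$. This establishes the partition and completes the first assertion, hence, by the reduction in the first paragraph, the whole theorem. I expect only the covering step, namely landing on a minimal generator, to be delicate; the cardinality bookkeeping and the uniqueness argument are routine.
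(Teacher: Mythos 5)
Your proof is correct, and it takes a genuinely different route from the paper's. The paper argues by induction on $n$: after reducing to $\m(I)=n$ it splits $G(I)$ into the generators with $\m(u)<n$ and those with $\m(u)=n$, handles the case of a degree-$d$ generator involving $x_n$ via the crude bound $\sdepth_n(I)\ge d$ from \cite{arxiv.0712.2308}, and otherwise passes to $I''=I\cap \KK[x_1,\dots,x_{n-1}]$, lifts the inductive Stanley decomposition by \cite[3.6]{arxiv.0712.2308}, and absorbs the remaining high-degree generators using Corollary \ref{rem-k}. You instead exhibit the explicit Eliahou--Kervaire-type interval partition $P_I=\coprod_{u\in G(I)}[u,\,u\,x_{\m(u)+1}\cdots x_n]$ and check directly that it is a partition, with squarefree stability entering exactly where you say it does (to guarantee that the greedy stripping lands on a minimal generator); your verification of covering, disjointness, and the identity $\min_u\deg(u\,x_{\m(u)+1}\cdots x_n)=\depth(I)$ via Lemma \ref{depth1}(b) is sound, and the reduction of the ``in particular'' clause to Remark \ref{sd_sqf_st_st} matches the discussion preceding the theorem. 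Your approach buys a constructive, self-contained Stanley decomposition whose Stanley depth is exactly $\depth(I)$ (so it also shows the canonical EK-type decomposition is already good enough), at the cost of the hands-on partition verification; the paper's induction avoids that verification entirely but leans on two external results (the lifting lemma and Corollary \ref{rem-k}) and is less explicit about which decomposition realizes the bound. Both are valid proofs of the statement.
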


\begin{proof}
  We prove by induction on the Krull dimension $n$. When $n=1$, all nonzero monomial ideals $I$ are principal with $\sdepth(I)=\depth(I)=1$.

  Now, let $n\ge 2$.  Let $I$ be a squarefree stable ideal of $S$, generated by monomials of degree $\ge d$ with $I_d\ne 0$. If $\m(I)< n$, take $S'=\KK[x_1,\dots,x_{m}]$ for $m=\m(I)$ and $I':=I\cap S'$. Then $I=I'S$ and $\m(I)=\m(I')$. Now $\depth(I)=\depth(I')+(n-m)$ and $\sdepth_n(I)=\sdepth_{m}(I')+(n-m)$ by \cite[3.6]{arxiv.0712.2308}. Thus, from the very beginning, we may assume that $\m(I)=n$.

  Obviously $G(I)=G_0\sqcup G_1$ where $G_0=\Set{u\in G(I): \m(u)< n}$ and $G_1=\Set{u\in G(I): \m(u)=n}$. $G_1\ne \emptyset$ by our assumption. If $G_1\cap S_{d} \ne \emptyset$, then $\depth(I)=d$ by Lemma \ref{depth1}. On the other hand, since $I$ is squarefree and generated by monomials of degree at least $d$, $\sdepth_n(I)\ge d$ by \cite[1.3, 3.1]{arxiv.0712.2308}. Thus, we are done in this case.

  Otherwise, $G_1 \subset S_{\ge \tilde{d}}$ where $\tilde{d}:=\depth(I)\ge d+1$. Write $S''=\KK[x_1,\dots,x_{n-1}]$ and $I''=I\cap S''$. Then $I''$ is squarefree stable with $G(I'')=G_0$ above. 
  By induction hypothesis and Lemma \ref{depth1}, we have $\sdepth_{n-1}(I'')\ge \depth_{S''}(I'')\ge \tilde{d}-1$. Thus, $\sdepth_n(I''S)\ge \tilde{d}$ by \cite[3.6]{arxiv.0712.2308}. Since $G_1\subset S_{\ge \tilde{d}}$, this implies that $\sdepth_n(I)\ge \tilde{d}=\depth(I)$ by Corollary \ref{rem-k}.
\end{proof}


\begin{bibdiv}
\begin{biblist}

\bib{MR1803232}{article}{
      author={Aramova, Annetta},
      author={Herzog, J{\"u}rgen},
      author={Hibi, Takayuki},
       title={Shifting operations and graded {B}etti numbers},
        date={2000},
        ISSN={0925-9899},
     journal={J. Algebraic Combin.},
      volume={12},
       pages={207\ndash 222},
         url={http://dx.doi.org/10.1023/A:1011238406374},
      review={\MR{1803232 (2001k:13022)}},
}

\bib{MR1958009}{article}{
      author={Apel, Joachim},
       title={On a conjecture of {R}. {P}. {S}tanley. {II}. {Q}uotients modulo
  monomial ideals},
        date={2003},
        ISSN={0925-9899},
     journal={J. Algebraic Combin.},
      volume={17},
       pages={57\ndash 74},
         url={http://dx.doi.org/10.1023/A:1021916908512},
      review={\MR{1958009 (2004e:13027)}},
}

\bib{MR609203}{article}{
      author={Baclawski, Kenneth},
      author={Garsia, Adriano~M.},
       title={Combinatorial decompositions of a class of rings},
        date={1981},
        ISSN={0001-8708},
     journal={Adv. in Math.},
      volume={39},
       pages={155\ndash 184},
         url={http://dx.doi.org/10.1016/0001-8708(81)90027-X},
      review={\MR{609203 (84a:06003a)}},
}

\bib{MR1251956}{book}{
      author={Bruns, Winfried},
      author={Herzog, J{\"u}rgen},
       title={Cohen-{M}acaulay rings},
      series={Cambridge Studies in Advanced Mathematics},
   publisher={Cambridge University Press},
     address={Cambridge},
        date={1993},
      volume={39},
        ISBN={0-521-41068-1},
      review={\MR{1251956 (95h:13020)}},
}

\bib{BHKTY}{article}{
      author={Bir{\'o}, Csaba},
      author={Howard, David~M.},
      author={Keller, Mitchel~T.},
      author={Trotter, William~T.},
      author={Young, Stephen~J.},
       title={Interval partitions and {S}tanley depth},
        date={2010},
        ISSN={0097-3165},
     journal={J. Combin. Theory Ser. A},
      volume={117},
       pages={475\ndash 482},
         url={http://dx.doi.org/10.1016/j.jcta.2009.07.008},
      review={\MR{MR2592896}},
}

\bib{bruns-2009}{article}{
      author={Bruns, Winfried},
      author={Krattenthaler, Christian},
      author={Uliczka, Jan},
       title={Stanley decompositions and {H}ilbert depth in the {K}oszul
  complex},
        date={2010},
        ISSN={1939-0807},
     journal={J. Commut. Algebra},
      volume={2},
       pages={327\ndash 357},
      review={\MR{2728147}},
}

\bib{arXiv:1002.1400}{incollection}{
      author={Bruns, Winfried},
      author={Krattenthaler, Christian},
      author={Uliczka, Jan},
       title={Hilbert depth of powers of the maximal ideal},
        date={2011},
   booktitle={Commutative algebra and its connections to geometry},
      series={Contemp. Math.},
      volume={555},
   publisher={Amer. Math. Soc.},
     address={Providence, RI},
       pages={1\ndash 12},
         url={http://dx.doi.org/10.1090/conm/555/10985},
      review={\MR{2882670}},
}

\bib{MR1401765}{article}{
      author={Bj{\"o}rner, Anders},
      author={Wachs, Michelle~L.},
       title={Shellable nonpure complexes and posets. {II}},
        date={1997},
        ISSN={0002-9947},
     journal={Trans. Amer. Math. Soc.},
      volume={349},
       pages={3945\ndash 3975},
         url={http://dx.doi.org/10.1090/S0002-9947-97-01838-2},
      review={\MR{1401765 (98b:06008)}},
}

\bib{arXiv:0907.1232}{article}{
      author={Cimpoea{\c{s}}, Mircea},
       title={Stanley depth of square free {V}eronese ideals},
        date={2009},
      eprint={arXiv:0907.1232},
         url={http://www.citebase.org/abstract?id=oai:arXiv.org:0907.1232},
}

\bib{arXiv:1112.4956}{article}{
      author={Cimpoea{\c{s}}, Mircea},
       title={{The Stanley conjecture on monomial almost complete intersection
  ideals}},
        date={2011},
     journal={ArXiv e-prints},
      eprint={arXiv:1112.4956},
}

\bib{CocoaSystem}{misc}{
      author={{CoCoA}Team},
       title={{{\hbox{\rm C\kern-.13em o\kern-.07em C\kern-.13em o\kern-.15em
  A}}}: a system for doing {C}omputations in {C}ommutative {A}lgebra},
         how={Available at \/ {\tt http://cocoa.dima.unige.it}},
        note={Available at \/ {\tt http://cocoa.dima.unige.it}},
}

\bib{MR1239277}{article}{
      author={Dress, Andreas},
       title={A new algebraic criterion for shellability},
        date={1993},
        ISSN={0138-4821},
     journal={Beitr\"age Algebra Geom.},
      volume={34},
       pages={45\ndash 55},
      review={\MR{1239277 (95d:13027)}},
}

\bib{MR597728}{article}{
      author={Garsia, Adriano~M.},
       title={Combinatorial methods in the theory of {C}ohen-{M}acaulay rings},
        date={1980},
        ISSN={0001-8708},
     journal={Adv. in Math.},
      volume={38},
       pages={229\ndash 266},
         url={http://dx.doi.org/10.1016/0001-8708(80)90006-7},
      review={\MR{597728 (82f:06002)}},
}

\bib{arXiv:0911.5458}{article}{
      author={{Ge}, M.},
      author={{Lin}, J.},
      author={Shen, Yi-Huang},
       title={{On a conjecture of Stanley depth of squarefree Veronese
  ideals}},
      eprint={arXiv:0911.5458},
        note={to appear in Communications in Algebra},
}

\bib{arXiv:1008.4108}{article}{
      author={Ge, Maorong},
      author={Lin, Jiayuan},
      author={Wang, Yulan},
       title={Hilbert series and {H}ilbert depth of squarefree {V}eronese
  ideals},
        date={2011},
        ISSN={0021-8693},
     journal={J. Algebra},
      volume={344},
       pages={260\ndash 267},
         url={http://dx.doi.org/10.1016/j.jalgebra.2011.07.027},
      review={\MR{2831940}},
}

\bib{arXiv:1106.3922}{article}{
      author={Ge, Maorong},
      author={Lin, Jiayuan},
      author={Wang, Yulan},
       title={{On Two Classes of Closely Related Monomial Ideals}},
        date={2011},
      eprint={arXiv:1106.3922},
}

\bib{MR0480478}{article}{
      author={Gotzmann, Gerd},
       title={Eine {B}edingung f\"ur die {F}lachheit und das {H}ilbertpolynom
  eines graduierten {R}inges},
        date={1978},
        ISSN={0025-5874},
     journal={Math. Z.},
      volume={158},
       pages={61\ndash 70},
      review={\MR{0480478 (58 \#641)}},
}

\bib{MR1890097}{incollection}{
      author={Herzog, J{\"u}rgen},
       title={Generic initial ideals and graded {B}etti numbers},
        date={2002},
   booktitle={Computational commutative algebra and combinatorics ({O}saka,
  1999)},
      series={Adv. Stud. Pure Math.},
      volume={33},
   publisher={Math. Soc. Japan},
     address={Tokyo},
       pages={75\ndash 120},
      review={\MR{1890097 (2003b:13021)}},
}

\bib{MR2366164}{article}{
      author={Herzog, J{\"u}rgen},
      author={Jahan, Ali~Soleyman},
      author={Yassemi, Siamak},
       title={Stanley decompositions and partitionable simplicial complexes},
        date={2008},
        ISSN={0925-9899},
     journal={J. Algebraic Combin.},
      volume={27},
       pages={113\ndash 125},
         url={http://dx.doi.org/10.1007/s10801-007-0076-1},
      review={\MR{2366164 (2008m:13041)}},
}

\bib{arxiv.0712.2308}{article}{
      author={Herzog, J{\"u}rgen},
      author={Vladoiu, Marius},
      author={Zheng, Xinxian},
       title={How to compute the {S}tanley depth of a monomial ideal},
        date={2009},
        ISSN={0021-8693},
     journal={J. Algebra},
      volume={322},
       pages={3151\ndash 3169},
         url={http://dx.doi.org/10.1016/j.jalgebra.2008.01.006},
      review={\MR{MR2567414}},
}

\bib{iarrobino1999gotzmann}{incollection}{
      author={Iarrobino, Anthony},
      author={Kleiman, Steven~L.},
       title={The gotzmann theorems and the hilbert scheme},
        note={Appendix C in {\it Power sums, Gorenstein algebras, and
  determinantal loci}, A. Iarrobino and V. Kanev, eds., Lecture Notes in
  Mathematics, vol.~1721, Springer-Verlag, Berlin, 1999.},
      review={\MR{1735271 (2001d:14056)}},
}

\bib{arXiv:1104.1018}{article}{
      author={Ishaq, Muhammad},
      author={Qureshi, Muhammad~Imran},
       title={{Stanley depth of edge ideals}},
        date={2011},
      eprint={arXiv:1104.1018},
}

\bib{arXiv:1010.5615}{article}{
      author={Ishaq, Muhammad},
       title={{Lexsegment ideals are sequentially Cohen-Macaulay}},
        date={2010},
      eprint={arXiv:1010.5615},
}

\bib{project4}{article}{
      author={Keller, Mitchel~T.},
      author={Shen, Yi-Huang},
      author={Streib, Noah},
      author={Young, Stephen~J.},
       title={On the {S}tanley depth of squarefree {V}eronese ideals},
        date={2011},
        ISSN={0925-9899},
     journal={J. Algebraic Combin.},
      volume={33},
       pages={313\ndash 324},
         url={http://dx.doi.org/10.1007/s10801-010-0249-1},
      review={\MR{2765327}},
}

\bib{KeYo2009}{article}{
      author={Keller, Mitchel~T.},
      author={Young, Stephen~J.},
       title={Stanley depth of squarefree monomial ideals},
        date={2009},
        ISSN={0021-8693},
     journal={J. Algebra},
      volume={322},
       pages={3789\ndash 3792},
         url={http://dx.doi.org/10.1016/j.jalgebra.2009.05.021},
      review={\MR{MR2568363}},
}

\bib{MR2434473}{article}{
      author={Murai, Satoshi},
      author={Hibi, Takayuki},
       title={Gotzmann ideals of the polynomial ring},
        date={2008},
        ISSN={0025-5874},
     journal={Math. Z.},
      volume={260},
       pages={629\ndash 646},
         url={http://dx.doi.org/10.1007/s00209-007-0293-2},
      review={\MR{2434473 (2010f:13025)}},
}

\bib{MR2531663}{article}{
      author={Nill, Benjamin},
      author={Vorwerk, Kathrin},
       title={Stanley's conjecture, cover depth and extremal simplicial
  complexes},
        date={2008},
        ISSN={0373-3505},
     journal={Matematiche (Catania)},
      volume={63},
       pages={213\ndash 228 (2009)},
      review={\MR{2531663 (2010m:13031)}},
}

\bib{Okazaki2009}{article}{
      author={Okazaki, Ryota},
       title={A lower bound of {S}tanley depth of monomial ideals},
        date={2011},
        ISSN={1939-0807},
     journal={J. Commut. Algebra},
      volume={3},
       pages={83\ndash 88},
      review={\MR{2782700}},
}

\bib{MR2554495}{article}{
      author={Popescu, Dorin},
       title={An inequality between depth and {S}tanley depth},
        date={2009},
        ISSN={1220-3874},
     journal={Bull. Math. Soc. Sci. Math. Roumanie (N.S.)},
      volume={52(100)},
       pages={377\ndash 382},
      review={\MR{2554495 (2010h:13019)}},
}

\bib{arXiv:1009.5646}{article}{
      author={Popescu, Dorin},
       title={Stanley conjecture on intersections of four monomial prime
  ideals},
        date={2010},
      eprint={arXiv:1009.5646},
}

\bib{MR2794292}{article}{
      author={Popescu, Dorin},
       title={Bounds of {S}tanley depth},
        date={2011},
        ISSN={1224-1784},
     journal={An. \c Stiin\c t. Univ. ``Ovidius'' Constan\c ta Ser. Mat.},
      volume={19},
       pages={187\ndash 194},
      review={\MR{2794292 (2012d:13038)}},
}

\bib{arXiv:1110.1963}{article}{
      author={Popescu, Dorin},
       title={Depth of factors of square free monomial ideals},
        date={2011},
     journal={Arxiv preprint arXiv:1110.1963},
}

\bib{arXiv:1206.3977}{article}{
      author={Popescu, Dorin},
       title={Upper bounds of depth of monomial ideals},
        date={2012},
      eprint={arXiv:1206.3977},
}

\bib{MR2609185}{article}{
      author={Popescu, Dorin},
      author={Qureshi, Muhammad~Imran},
       title={Computing the {S}tanley depth},
        date={2010},
        ISSN={0021-8693},
     journal={J. Algebra},
      volume={323},
       pages={2943\ndash 2959},
         url={http://dx.doi.org/10.1016/j.jalgebra.2009.11.025},
      review={\MR{2609185}},
}

\bib{arXiv:0812.2080}{article}{
      author={Rauf, Asia},
       title={Depth and {S}tanley depth of multigraded modules},
        date={2010},
        ISSN={0092-7872},
     journal={Comm. Algebra},
      volume={38},
       pages={773\ndash 784},
         url={http://dx.doi.org/10.1080/00927870902829056},
      review={\MR{2598911}},
}

\bib{MR2531665}{article}{
      author={Rinaldo, Giancarlo},
       title={An algorithm to compute the {S}tanley depth of monomial ideals},
        date={2008},
        ISSN={0373-3505},
     journal={Matematiche (Catania)},
      volume={63},
       pages={243\ndash 256 (2009)},
      review={\MR{2531665 (2010d:13035)}},
}

\bib{arXiv.org:0805.4461}{article}{
      author={Shen, Yi-Huang},
       title={Stanley depth of complete intersection monomial ideals and
  upper-discrete partitions},
        date={2009},
        ISSN={0021-8693},
     journal={J. Algebra},
      volume={321},
       pages={1285\ndash 1292},
         url={http://dx.doi.org/10.1016/j.jalgebra.2008.11.010},
      review={\MR{2489900 (2009k:13044)}},
}

\bib{project7}{article}{
      author={Shen, Yi-Huang},
       title={When will the stanley depth increase},
        date={2011},
      eprint={arXiv:1110.3182},
}

\bib{MR526314}{article}{
      author={Stanley, Richard~P.},
       title={Balanced {C}ohen-{M}acaulay complexes},
        date={1979},
        ISSN={0002-9947},
     journal={Trans. Amer. Math. Soc.},
      volume={249},
       pages={139\ndash 157},
         url={http://dx.doi.org/10.2307/1998915},
      review={\MR{526314 (81c:05012)}},
}

\bib{MR666158}{article}{
      author={Stanley, Richard~P.},
       title={Linear {D}iophantine equations and local cohomology},
        date={1982},
        ISSN={0020-9910},
     journal={Invent. Math.},
      volume={68},
       pages={175\ndash 193},
         url={http://dx.doi.org/10.1007/BF01394054},
      review={\MR{666158 (83m:10017)}},
}

\bib{MR2609292}{article}{
      author={Uliczka, Jan},
       title={Remarks on {H}ilbert series of graded modules over polynomial
  rings},
        date={2010},
        ISSN={0025-2611},
     journal={Manuscripta Math.},
      volume={132},
       pages={159\ndash 168},
         url={http://dx.doi.org/10.1007/s00229-010-0341-9},
      review={\MR{2609292 (2011c:13030)}},
}

\bib{MR1871828}{book}{
      author={van Lint, J.~H.},
      author={Wilson, R.~M.},
       title={A course in combinatorics},
     edition={Second},
   publisher={Cambridge University Press},
     address={Cambridge},
        date={2001},
        ISBN={0-521-00601-5},
      review={\MR{1871828 (2002i:05001)}},
}

\bib{arXiv:0908.3699}{article}{
      author={{Wang}, Y.},
       title={{A New Depth Related to the Stanley Depth of Some Power Sets of
  Multisets}},
        date={2009},
      eprint={arXiv:0908.3699},
}

\bib{arXiv:1107.3211}{article}{
      author={Zarojanu, Andrei},
       title={Stanley conjecture on intersection of three monomial primary
  ideals},
      eprint={arXiv:1107.3211},
}

\end{biblist}
\end{bibdiv}

\end{document}